\newtheorem{theo}{Theorem}[section]
\newtheorem{lem}[theo]{Lemma}
\newtheorem{prop}[theo]{Proposition}
\numberwithin{equation}{section}
\newcommand{\M}{\operatorname{M}}
\newcommand{\w}{\operatorname{w}}
\newcommand{\per}{\operatorname{per}}
\newcommand{\sgn}{\operatorname{sgn}}
\newcommand{\Z}{\mathbb{Z}}
\newcommand{\la}{\lambda}
\newcommand{\ve}{\varepsilon}
\newcommand{\al}{\alpha}
\newcommand{\be}{\beta}
\newcommand{\epf}{\hfill{$\square$}\medskip}
\mathchardef\pFcomma=\mathcode`, % keep a copy of the comma
\begin{document}

\title{A new solution for the two dimensional dimer problem}
%Symmetries of shamrocks III: 
%%Graphical condensation with free boundary and 
%The symmetric self complementary case}
%\title{The number of lozenge tilings of a hexagon with dents on the boundary}

\author{Mihai Ciucu}
\address{Department of Mathematics, Indiana University, Bloomington, Indiana 47405}

\thanks{Research supported in part by NSF grant DMS-1501052 and Simons Foundation Collaboration Grant 710477}

\begin{abstract} The classical 1961 solution to the problem of determining the number of perfect matchings (or dimer coverings) of a rectangular grid graph --- due independently to Kasteleyn and to Temperley and Fisher --- consists of changing the sign of some of the entries in the adjacency matrix so that the Pfaffian of the new matrix gives the number of perfect matchings, and then evaluating this Pfaffian. Another classical method is to use the Lindstr\"om-Gessel-Viennot theorem on non-intersecting lattice paths to express the number of perfect matchings as a determinant, and then evaluate this determinant. In this paper we present a new method for solving the two dimensional dimer problem, which relies on the Cauchy-Binet theorem. It only involves facts that were known in the mid 1930's when the dimer problem was phrased, so it could have been discovered while the dimer problem was still open.

We provide explicit product formulas for both the square and the hexagonal lattice. One advantage of our formula for the square lattice compared to the original formula of Kasteleyn, Temperley and Fisher is that ours has a linear number of factors, while the number of factors in the former is quadratic. Our result for the hexagonal lattice yields a formula for the number of periodic stepped surfaces that fit in an infinite tube of given cross-section, which can be regarded as a counterpart of MacMahon's boxed plane partition theorem. 
  
\end{abstract}

\maketitle

%
%\begin{figure}[h]
%\centerline{
%\hfill
%{\includegraphics[width=0.90\textwidth]{proof8a.eps}}
%\hfill
%}
%\vskip-0.1in
%\caption{Schematic representation of the bijection proving \eqref{eba}. Shifting along path containing $a$ matches the partition classes according to the pattern}
%\vskip-0.1in
%\label{fba}
%\end{figure}
%

\section{Introduction}

% \cite{Fulmek}

% Figure \ref{fba}

In 1961 Temperley and Fisher \cite{TF,Fisher-dimer}, and Kasteleyn \cite{Kast}, independently determined the number of perfect matchings of a rectangular grid graph, thus solving the dimer problem posed in 1937 by Fowler and Rushbrooke \cite{FR} in the limiting case when the dimers completely fill the lattice (the so called close-packed or high density limit); we refer to the formula they discovered --- included in this paper as equation \eqref{eff} --- as the TFK formula. Both solutions consist of changing the sign of some of the entries in the adjacency matrix of the grid graph so that the Pfaffian of the new matrix gives the number of perfect matchings, and then evaluating this Pfaffian. The resulting formula was then analyzed asymptotically to calculate the so called free energy per site --- the limit of the ratio between the logarithm of the number of perfect matchings and the number of vertices in the rectangular graph.

Even though this was not realized until later, MacMahon \cite{MacM} solved an instance of the dimer problem on the hexagonal lattice (in the equivalent language of plane partitions; see \cite{DT}), by providing a simple product formula for the number of perfect matchings of honeycomb graphs (centrally symmetric hexagonal portions of the hexagonal lattice).

A modern way of proving MacMahon's result is to use the Lindstr\"om-Gessel-Viennot theorem (see \cite{L,GV,Ste}) on non-intersecting lattice paths to express the number of perfect matchings as a determinant, and then evaluate this determinant.

The Pfaffian method and the non-intersecting lattice paths method are two classical tools for determining the number of perfect matchings of a planar graph. In this paper we present a new method, based on the Cauchy-Binet theorem, and use it to give a new solution to the dimer model on the hexagonal and on the square lattice. This solution only involves facts that were known in the mid 1930's when the dimer problem was phrased, so it could have been discovered while the dimer problem was still open.

The particular form of the involved graphs has cylindrical boundary conditions, and the exact product formulas we obtain seem not to have appeared previously in the literature.

This paper is organized as follows. In Section 2 we present the new method, and state the general result as Theorem \ref{tba}. In Section 3 we apply the general result to honeycomb graphs embedded in a cylinder, and find an explicit product formula for the number of their perfect matchings (see Theorem \ref{tca}). In Section 4 we connect these results to the enumeration of cylindric partitions of a certain double staircase shape. Theorem \ref{tda} gives a formula for the number of periodic stepped surfaces that fit in an infinite tube of given cross-section. It can be regarded as a counterpart of MacMahon's boxed plane parition theorem \cite{MacM} (see Remark 5).

In Section 5 we turn to applications involving the square lattice. We show how Theorem \ref{tba} can be used to find an explicit product formula for the number of perfect matchings of square cylinder graphs of even girth (see Theorem \ref{tea}).

Square cylinder graphs of odd girth are dealt with in Section 7. We use there an extension of our factorization theorem from \cite{FT} (which we present in Section 6; see Theorem \ref{tga}) to obtain a common formula for the number of perfect matchings of square cylinders of even or odd girth. Section 8 contains some concluding remarks.

\begin{figure}[t]
\vskip0.1in
\centerline{
\hfill
{\includegraphics[width=0.65\textwidth]{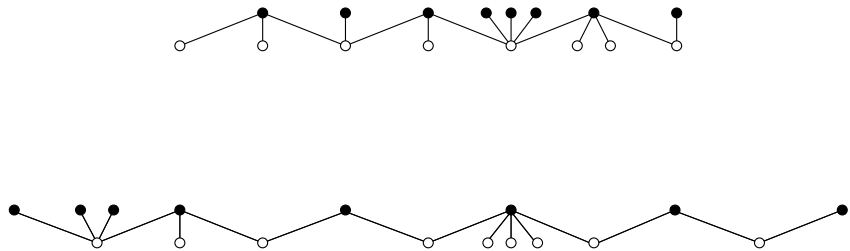}}
\hfill
}
%\vskip-0.1in
\caption{\label{fba} Two strands.}
%\vskip-0.15in
\end{figure}

\begin{figure}[t]
\vskip0.2in
\centerline{
\hfill
{\includegraphics[width=0.65\textwidth]{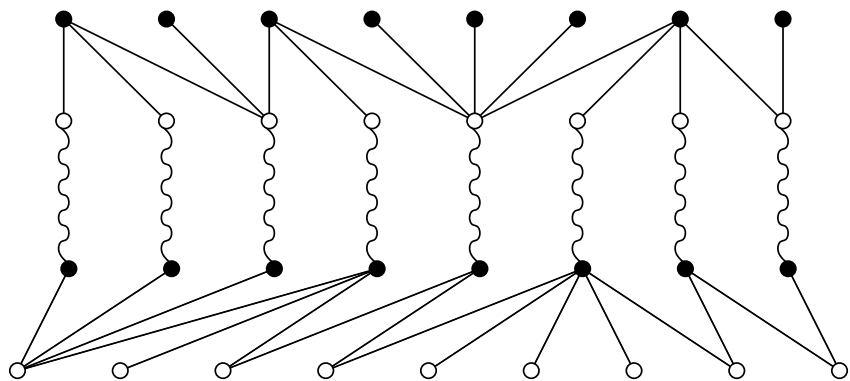}}
\hfill
}
%\vskip-0.1in
\caption{\label{fbb} Connecting the two strands in Figure \ref{fba}.}
%\vskip-0.15in
\end{figure}

\section{The new method}

A {\it strand} is a path with pending edges (see Figure \ref{fba} for an illustrative example); the edges in a strand are allowed to carry arbitrary weights (in particular, these weights are allowed to equal zero). Thus, strands are weighted bipartite graphs. We always draw strands so that their vertices are lined up along two horizontal lines, with white vertices on the bottom and black vertices on top.

Our new method for enumerating dimer coverings applies to a special kind of planar graphs built from strands, which we call {\it fabric graphs}. There are two versions of them. Both consist of horizontal strands joined together by vertical edges.

A {\it rectangular fabric graph} is a weighted graph obtained from horizontal strands $S_1,\dotsc,S_m$ as follows. Draw the strands in the plane successively one above the other, with $S_1$ on the bottom. Denote the number of white (bottom) vertices in $S_i$ by $k_i$, and the number of black (top) vertices by $l_i$, $i=1,\dotsc,m$. Assume $l_i=k_{i+1}$, for $i=1,\dotsc,m-1$, and join the top vertices of the strand $S_i$ to the bottom vertices of the strand $S_{i+1}$ by vertical edges
of weight 1
(see Figure~\ref{fbb}; for easy distinguishing, vertical edges between the strands are indicated as wavy lines). The resulting graph is called a rectangular fabric graph.

The second family consists of {\it cylindrical fabric graphs}. Start as above with strands $S_1,\dotsc,S_m$, but assume now that $l_i=k_{i+1}$, for $i=1,\dotsc,m$, where $k_{m+1}:=k_1$.
%Assume also that $m$ is even\footnote{}.
Then, in addition to joining together strands $S_i$ and $S_{i+1}$ by vertical edges for $i=1,\dotsc,m-1$, as in the previous paragraph, join together also the top vertices of the strand $S_m$ with the bottom vertices of the strand $S_1$, by vertical edges (which can be thought of as curving down behind a horizontal cylinder). Furthermore, more generally than for the case of their rectangular counterparts, weight the vertical edges of cylindrical fabric graphs as follows: weight the vertical edges connecting strand $S_i$ to strand $S_{i+1}$ by $x_i$, $i=1,\dotsc,m$ (where $S_{m+1}:=S_1$).

The {\it bi-adjacency} matrix of strand $S_i$ is the matrix $A_i$ defined as follows. The rows of $A_i$ are indexed by the bottom vertices of $S_i$ (listed from left to right), and the columns of $A_i$ are indexed by the top vertices of $S_i$ (also listed from left to right). Then if $u$ is a bottom vertex of $S_i$ and $v$ is a top vertex of $S_i$, define the $(u,v)$-entry of the bi-adjacency matrix $A_i$ to be the weight of the edge $\{u,v\}$, if $u$ and $v$ are connected by an edge in $S_i$, and zero otherwise.

In general, given a weighted graph $G$, we denote by $\M(G)$ the sum of the weights of all perfect matchings\footnote{ A perfect matching  $\mu$ of a graph $G$ is a collection of edges of $G$ with the property that every vertex of $G$ is incident to precisely one edge in $\mu$. The weight of $\mu$ is defined to be the product of the weights of the edges in $\mu$. If $G$ is not weighted (equivalently, all edges of $G$ have weight 1), then $\M(G)$ is simply the number of perfect matchings of $G$.} of $G$. However, if $G$ is a cylindrical fabric graph, we often write $\M(G;x_1,\dotsc,x_m)$ (or $\M(G;x)$, if $x_1=\cdots=x_m=x$) for the sum of the weights of its perfect matchings, in order to spell out the weights of the vertical edges.

As a consequence of their definition, both rectangular fabric graphs and cylindrical fabric graphs are bipartite. A necessary condition for a bipartite graph to admit a perfect matching is to be {\it balanced} (i.e., to have the same number of vertices in the two bipartition classes). The definition of cylindrical fabric graphs implies that they are balanced. A rectangular fabric graph has $k_1+\cdots+k_m$ white and $l_1+\cdots+l_m$ black vertices, and since $l_i=k_{i+1}$, $i=1,\dotsc,m-1$, it is balanced if and only if $k_1=l_m$.

Therefore, without loss of generality, we may assume that for any fabric graph (be it rectangular or cylindrical) with $m$ strands, there exist non-negative integers $l_1,\dotsc,l_m$ so that the $i$th strand has $l_{i-1}$ bottom vertices and $l_i$ top vertices, $i=1,\dotsc,m$ (with $l_0=l_m$).

\begin{theo}
\label{tba}  
$(${\rm a}$)$. If $G$ is a balanced rectangular fabric graph with $m$ strands, then
\begin{equation}
\M(G)=\det(A_1A_2\cdots A_m).
\label{eba}
\end{equation}

\parindent0pt
$(${\rm b}$)$. If $G$ is a cylindrical fabric graph with $m$ strands, strand $S_i$ having $l_{i-1}$ bottom vertices and $l_i$ top vertices, $i=1,\dotsc,m$, we have
\begin{equation}
\M(G;x_1,\dotsc,x_m)=x_1^{l_1-l_m}\cdots x_{m-1}^{l_{m-1}-l_{m}}\det((x_1\cdots x_m)I+A_1A_2\cdots A_m),
\label{ebb}
\end{equation}
where $I$ is the identity matrix of order $l_m$.
\end{theo}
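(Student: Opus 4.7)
The plan is to reduce both parts of the theorem to the Cauchy--Binet formula via a local ``$\det=\M$'' identity for each individual strand. In outline: expand $\M(G;x_1,\dotsc,x_m)$ over the possible sets of vertical edges present in a matching, replace the resulting strand contributions by determinants of submatrices of $A_i$, and recognize the multiple sum as an iterated Cauchy--Binet expansion.

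The first step is a local identity. I would show that for any strand $S_i$ and any row set $R\subseteq[l_{i-1}]$, column set $C\subseteq[l_i]$ with $|R|=|C|$, one has $\det(A_i[R,C])=\M(S_i[R,C])$, where $S_i[R,C]$ is the induced subgraph on bottom vertices $R$ and top vertices $C$. A strand is a path with pendant edges, hence a tree, so every induced sub-bipartite-graph is a forest and admits at most one perfect matching. Moreover, the strand is drawn planarly with white vertices on a bottom horizontal line and black vertices on a top line, so that any two edges $\{w_a,b_c\}$ and $\{w_{a'},b_{c'}\}$ present in it satisfy that $(a-a')$ and $(c-c')$ have the same sign; consequently the unique matching of a substrand (when it exists) corresponds, in the relative left-to-right labeling of $R$ and $C$, to the identity permutation and thus contributes with sign $+1$ to the Leibniz expansion of $\det(A_i[R,C])$.

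The second step is the expansion. Every perfect matching of the fabric graph is determined by a choice, at each cut between $S_i$ and $S_{i+1}$, of the set $V_i\subseteq[l_i]$ of positions at which the vertical edge is used, together with perfect matchings of the residual substrands. Writing $S_i:=[l_i]\setminus V_i$ (with the cylindrical convention $V_0:=V_m$; for the rectangular case, $V_0=V_m=\emptyset$), the balance conditions $|V_i|-|V_{i-1}|=l_i-l_{i-1}$ force the common value $r:=|S_i|$; the contribution of the vertical edges to the weight is $\prod_i x_i^{l_i-r}$. Applying the local identity, and setting $M:=A_1A_2\cdots A_m$, we arrive at
\begin{equation*}
\M(G;x_1,\dotsc,x_m)=\sum_{r}\Bigl(\prod_{i=1}^m x_i^{l_i-r}\Bigr)\sum_{|S|=r}\sum_{S_1,\dotsc,S_{m-1}}\prod_{i=1}^m\det(A_i[S_{i-1},S_i]),
\end{equation*}
where $S_0=S_m=S$ and $|S_j|=r$ throughout. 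Iterated Cauchy--Binet collapses the inner sum into $\det(M[S,S])$. Part~(a) is then the specialization $V_0=V_m=\emptyset$, which forces $r=l_m$ and $S=[l_m]$, yielding $\M(G)=\det(M)$. Part~(b) follows by summing over $r$ and $S$ using the elementary identity $\det(tI+M)=\sum_{r}t^{l_m-r}\sum_{|S|=r}\det(M[S,S])$ with $t=x_1\cdots x_m$, and then pulling the factor $\prod_{i=1}^{m-1}x_i^{l_i-l_m}$ out front (using $x_m^{\,l_m-l_m}=1$).

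The main obstacle I expect is the local identity of the first step, as it is where the specific combinatorial nature of strands enters: one needs both the tree property (guaranteeing uniqueness of the substrand matching) and planarity (guaranteeing a non-crossing, hence identity-signed, matching). Once this is in place, the remainder of the argument is formal: a Cauchy--Binet expansion over the intermediate subsets $S_1,\dotsc,S_{m-1}$, followed by the elementary recognition of the characteristic-polynomial-like sum.
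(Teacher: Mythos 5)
Your proposal is correct and follows essentially the same route as the paper: the paper's proof likewise decomposes matchings by the sets of vertical edges used, converts each strand's contribution to a determinant of a submatrix (via $\M=\per$ together with the observation that the staircase support of a strand's bi-adjacency matrix makes every nonzero determinant term come from the identity permutation, which is exactly your sign argument), iterates Cauchy--Binet, and finishes part (b) with the identity $\det(tI+M)=\sum_{J}t^{l_m-|J|}\det M_J^J$. The only cosmetic difference is that you appeal additionally to the forest structure of substrands for uniqueness of the matching, which the paper does not need.
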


\parindent0pt
Our proof of Theorem \ref{tba} is based on three preliminary lemmas. Recall that the permanent $\per A$ of an $n\times n$ matrix $A=(a_{ij})$ is defined to be
\begin{equation}
\per A=\sum_{\pi\in {\mathcal S}_n} a_{1,\pi(1)} a_{2,\pi(2)}\cdots a_{n,\pi(n)},
\label{eper}
\end{equation}
where ${\mathcal S}_n$ is the set of permutations of order $n$. The following lemma is well known.

\begin{lem}
\label{tbb}
Let $G$ be a weighted bipartite graph with the same number of white and black vertices. Let $A$ be the bi-adjacency matrix of $G$ $($i.e., the rows of $A$ are indexed by the white vertices, the columns by the black vertices, and the $(u,v)$ entry is the weight of the edge $\{u,v\}$, or $0$ if there is no edge between $u$ and $v$$)$. Then 
\begin{equation}
\M(G)=\per A.
\label{etbb}
\end{equation}

\end{lem}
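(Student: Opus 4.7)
The plan is to unfold the definition of the permanent and pair each summand with a potential perfect matching. Let $n$ denote the common size of the two color classes, and write the white vertices as $w_1,\dotsc,w_n$ and the black vertices as $b_1,\dotsc,b_n$, so that $A=(a_{ij})$ with $a_{ij}$ equal to the weight of the edge $\{w_i,b_j\}$, or $0$ if no such edge exists.

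First I would set up a bijection between $\mathcal{S}_n$ and ``candidate'' matchings of $G$. Any perfect matching $\mu$ of $G$ induces a permutation $\pi_\mu\in\mathcal{S}_n$ via $\pi_\mu(i)=j$ iff $\{w_i,b_j\}\in\mu$; conversely, any $\pi\in\mathcal{S}_n$ yields a set $\mu_\pi=\{\{w_i,b_{\pi(i)}\}:i=1,\dotsc,n\}$ which covers each vertex exactly once, and is a genuine perfect matching of $G$ precisely when every $\{w_i,b_{\pi(i)}\}$ is an edge of $G$.

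Next I would compare weights. With the convention that missing edges carry weight $0$, the product $\prod_{i=1}^n a_{i,\pi(i)}$ is $0$ whenever $\mu_\pi$ is not a perfect matching of $G$, and otherwise equals $\w(\mu_\pi)$, the product of the weights of its edges. Hence
\begin{equation*}
\per A=\sum_{\pi\in\mathcal{S}_n}\prod_{i=1}^n a_{i,\pi(i)}=\sum_{\substack{\pi\in\mathcal{S}_n\\ \mu_\pi\text{ is a p.m.}}}\w(\mu_\pi)=\sum_{\mu}\w(\mu)=\M(G),
\end{equation*}
where the last sum runs over all perfect matchings of $G$.

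There is essentially no obstacle here: the lemma is a direct consequence of the definitions, and the only minor point requiring care is the bookkeeping convention that edges of weight zero (or absent edges) contribute trivially to the permanent, which is precisely what allows the sum over $\mathcal{S}_n$ to collapse onto the set of actual perfect matchings of $G$.
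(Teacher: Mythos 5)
Your proposal is correct and follows essentially the same route as the paper: expand the permanent, identify each nonzero term of $\sum_{\pi}a_{1,\pi(1)}\cdots a_{n,\pi(n)}$ with a perfect matching (no shared white vertex since no two factors are in the same row, no shared black vertex since no two are in the same column), and observe that the term then equals the matching's weight. The paper's proof is just a more compressed version of your argument.
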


\begin{proof} Each non-zero term in the expansion of $\per A$ (see the right hand side of \eqref{eper}) corresponds to a collection of $n$ edges of $G$ that share no white vertex (because no two $a_{i,\pi(i)}$'s are in the same row) and no black vertex (because no two $a_{i,\pi(i)}$'s are in the same column), i.e. to a perfect matching of $G$.
\end{proof}

\begin{lem}
\label{tbc}
Let $S$ be a strand $($see definition at the beginning of this section$)$, and let $A$ be the bi-adjacency matrix of $S$, with the rows of $A$ indexed by the bottom vertices of $S$ listed from left to right, and the columns of $A$ indexed by the top vertices of $S$ listed from left to right. Then for any square submatrix $B$ of $A$ we have
\begin{equation}
\per B=\det B.
\label{etbc}
\end{equation}

\end{lem}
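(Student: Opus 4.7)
The plan is to show that for any square submatrix $B$ of $A$ the only permutation that can contribute a non-zero term to $\per B$ is the identity. Both $\per B$ and $\det B$ will then collapse to the product of the diagonal entries of $B$, and they will coincide (since the identity permutation contributes with sign $+1$).

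In detail, suppose $B$ is cut out by the rows indexed by $w_{i_1}<\cdots<w_{i_k}$ and the columns indexed by $b_{j_1}<\cdots<b_{j_k}$. A non-zero term $\prod_{p=1}^k b_{p,\sigma(p)}$ of $\per B$ requires that $w_{i_p}b_{j_{\sigma(p)}}$ be an edge of $S$ for each $p$. If $\sigma$ were not the identity, there would exist indices $p<q$ with $\sigma(p)>\sigma(q)$; then the two edges $w_{i_p}b_{j_{\sigma(p)}}$ and $w_{i_q}b_{j_{\sigma(q)}}$ of $S$ would have their lower endpoints in left-to-right order $i_p<i_q$ but their upper endpoints in reverse order $j_{\sigma(p)}>j_{\sigma(q)}$, forcing them to cross in the strip between the two horizontal lines. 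Since $S$ is drawn in the prescribed two-line layout without edge crossings, this is a contradiction, so $\sigma$ must be the identity. The same reasoning applies term-by-term to $\det B$, yielding $\per B=\det B=b_{1,1}b_{2,2}\cdots b_{k,k}$ (with the convention that this is $0$ if any diagonal entry vanishes).

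The step that looks like it needs the most justification is the non-crossing property invoked above: namely, that $S$, being a path with pendant edges drawn as in Figure~\ref{fba}, really does admit a planar two-line drawing in which no two edges cross as curves in the strip. This is implicit in the definition of a strand (and is indeed required for the fabric graphs assembled from strands to be planar, as stated in Section~2). Once this is in hand, the rest of the argument is just the topological observation that a non-identity permutation produces a crossing pair of edges, together with the fact that the identity contributes with sign $+1$ to the determinant; I do not anticipate any further computational obstacle.
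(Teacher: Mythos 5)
Your proof is correct and follows essentially the same route as the paper: both arguments show that any non-zero term of $\per B$ or $\det B$ must come from a permutation with no inversions (hence the identity), so the two expansions coincide termwise. The paper phrases the key monotonicity fact as the support of $A$ being contained in a down-and-right rook staircase, while you phrase it as the non-crossing property of the two-line drawing of the strand; these are equivalent statements about the same structure.
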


\begin{proof} One readily sees that for the indicated ordering of the vertices of the strand, if the entries of the bi-adjacency matrix $A$ are thought of as residing at the centers of the unit squares of an $n\times n$ chessboard, the support of $A$ is contained in a zigzag strip --- the path of a rook that is allowed to move only down or to the right. This implies that in the expansion 
\begin{equation}
\det B=\sum_{\sigma\in{\mathcal S}_k}(-1)^{\sgn(\sigma)}b_{1,\sigma(1)}\cdots b_{k,\sigma(k)}
%\label{etbc}
\end{equation}
of the determinant of any $k\times k$ submatrix $B=(b_{i,j})_{1\leq i,j\leq k}$ of $A$, all the non-zero terms correspond to permutations $\sigma$ with no inversions.
\end{proof}

\begin{lem}
\label{tbd}
Let $A$ be an $n\times n$ matrix. Then\footnote{ Following customary notation, we set $[n]:=\{1,\dotsc,n\}$. As usual, $A_I^J$ denotes the submatrix of $A$ obtained by choosing its elements in rows with indices in the set $I$ and columns with indices in the set $J$.}
\begin{equation}
\sum_{J\subset[n]}x^{n-|J|}\det A_J^J=\det (xI_n+A),
\label{etbd}
\end{equation}
where $I_n$ is the identity matrix of order $n$.
\end{lem}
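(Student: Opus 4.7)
The plan is to expand the right-hand side $\det(xI_n+A)$ by exploiting the multilinearity of the determinant in the columns, and then to match each resulting piece with a principal minor of $A$.

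First I would write the $j$th column of $xI_n+A$ as $xe_j+a_j$, where $e_j$ is the $j$th standard basis vector and $a_j$ is the $j$th column of $A$. Applying multilinearity column by column yields
\begin{equation*}
\det(xI_n+A)=\sum_{J\subseteq[n]} x^{n-|J|} \det M_J,
\end{equation*}
where $M_J$ is the $n\times n$ matrix whose $j$th column equals $a_j$ when $j\in J$ and $e_j$ when $j\notin J$. The $2^n$ terms in this expansion are indexed exactly as in the left-hand side of \eqref{etbd}, so it remains to verify that $\det M_J=\det A_J^J$ for every $J\subseteq[n]$.

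To do this I would iteratively perform Laplace expansion along the columns of $M_J$ indexed by $[n]\setminus J$. Enumerating $[n]\setminus J=\{i_1<i_2<\cdots<i_{n-|J|}\}$ and expanding first along column $i_1$ is immediate, because that column equals $e_{i_1}$: its only non-zero entry is a $1$ in row $i_1$, so the corresponding cofactor contributes the sign $(-1)^{i_1+i_1}=+1$ times the determinant of the matrix $M_J$ with row $i_1$ and column $i_1$ deleted. The key observation is that after this deletion the remaining columns indexed by $i_2,\dotsc,i_{n-|J|}$ are still standard basis vectors (their single non-zero entry was not in row $i_1$), so the procedure can be iterated. After $n-|J|$ such expansions, every sign is $+1$ and we are left precisely with the submatrix of $A$ formed by the rows and columns in $J$, namely $A_J^J$.

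There is essentially no obstacle here beyond careful bookkeeping of which rows and columns survive after successive deletions and the verification that each cofactor sign is $+1$; the only thing to check is that the standard-basis structure of the deleted columns is preserved at every step, which is immediate because removing a row other than the one carrying the $1$ leaves a standard basis vector of one lower dimension. Combining the multilinear expansion with the identity $\det M_J=\det A_J^J$ gives \eqref{etbd}.
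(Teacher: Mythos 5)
Your proposal is correct and follows essentially the same route as the paper: both expand $\det(xI_n+A)$ by multilinearity in the columns into $2^n$ terms indexed by subsets $J\subseteq[n]$, and then identify each term with $x^{n-|J|}\det A_J^J$. The only difference is that you spell out the cofactor expansions along the standard-basis columns that the paper dismisses as "clearly equal," which is a harmless elaboration.
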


\begin{proof} Regard the $j$th column of the matrix $xI_n+A$ as the sum of the $j$th column of $xI_n$ and the $j$th column of $A$, for $j=1,\dotsc,n$. Using the linearity of the determinant in columns, $\det (xI_n+A)$ becomes a sum of $2^n$ determinants. Each term in this sum is the determinant of a matrix obtained from $A$ by picking some subset $J\subset[n]$, keeping the entries of $A$ in the rows and columns indexed by $J$, and replacing them by the entries of $xI_n$ in the rows and columns with indices outside $J$. But this determinant is clearly equal to $x^{n-|J|}\det A_J^J$.
\end{proof}

\parindent0pt
\medskip
{\it Proof of Theorem $\ref{tba}$.} $(${\rm a}$)$. Perfect matchings of a rectangular fabric graph can be thought of as being built in two stages: (1) specify which vertical edges participate in each level --- this is equivalent to specifying subsets $J_i\subset[l_i]$, for $i=1,\dotsc,m-1$; and (2) then choosing the perfect matching {\it internally} within each horizontal strand, independently.

\parindent12pt
Denote by $S_i(I,J)$ the subgraph of the strand $S_i$ obtained by deleting bottom vertices with labels\footnote{ The bottom vertices in a strand are labeled from left to right by consecutive integers starting with 1; similarly the top vertices.} in $I\subset[k_i]$ and top vertices with labels in $J\subset[l_i]$. Viewing perfect matchings as described in the previous paragraph we obtain

\begin{align}
\M(G)&=\sum_{J_1\subset[l_1],\dotsc,J_{m-1}\subset[l_{m-1}]}\M(S_1(\emptyset,J_1))\M(S_2(J_1,J_2))
\cdots\M(S_2(J_{m-2},J_{m-1}))\M(S_2(J_{m-1},\emptyset))
\nonumber
\\[10pt]
&=
\sum_{J_1\subset[l_1],\dotsc,J_{m-1}\subset[l_{m-1}]}\per (A_1)_{[l_m]}^{[l_1]\setminus J_1}
\per (A_2)_{[l_1]\setminus J_1}^{[l_2]\setminus J_2}\cdots
\per (A_{m-1})_{[l_{m-2}]\setminus J_{m-2}}^{[l_{m-1}]\setminus J_{m-1}}
\per (A_{m})_{[l_{m-1}]\setminus J_{m-1}}^{[l_m]},
\nonumber
\\[10pt]
&=
\sum_{J_1\subset[l_1],\dotsc,J_{m-1}\subset[l_{m-1}]}\det (A_1)_{[l_m]}^{[l_1]\setminus J_1}
\det (A_2)_{[l_1]\setminus J_1}^{[l_2]\setminus J_2}\cdots
\det (A_{m-1})_{[l_{m-2}]\setminus J_{m-2}}^{[l_{m-1}]\setminus J_{m-1}}
\det (A_{m})_{[l_{m-1}]\setminus J_{m-1}}^{[l_m]},
\label{ebc}
\end{align}  
where at the second and third equalities we used Lemmas \ref{tbb} and \ref{tbc}, respectively, and in the last two summations all the involved matrices are required to be square (i.e., $|J_1|=l_1-l_m$, $|J_{i}|-|J_{i-1}|=l_i-l_{i-1}$, $i=2,\dotsc,m-1$, $|J_{m-1}|=l_{m-1}-l_m$, which in turn is equivalent to $|J_i|=l_i-l_m$, $i=1,\dotsc,m-1$).

Using the Cauchy-Binet theorem (see e.g. \S4.6, pp. 208--214 in \cite{BW})  the right hand side of \eqref{ebc} can be written as

\begin{align}
  &
\sum_{J_2\subset[l_2],\dotsc,J_{m-1}\subset[l_{m-1}]\atop |J_2|=l_2-l_m,\dotsc,|J_{m-1}|=l_{m-1}-l_m}
\det (A_3)_{[l_2]\setminus J_2}^{[l_3]\setminus J_3}\cdots
%\det (A_{m-1})_{[k_{m-1}]\setminus J_{m-2}}^{[l_{m-1}]\setminus J_{m-1}}
\det (A_{m})_{[l_{m-1}]\setminus J_{m-1}}^{[l_m]}
\sum_{J_1\subset[l_1]\atop |J_1|=l_1-l_m}\det (A_1)_{[l_m]}^{[l_1]\setminus J_1}\det (A_2)_{[l_1]\setminus J_1}^{[l_2]\setminus J_2}
\nonumber
\\[10pt]
&\ \
=
\sum_{J_2\subset[l_2],\dotsc,J_{m-1}\subset[l_{m-1}]\atop |J_2|=l_2-l_m,\dotsc,|J_{m-1}|=l_{m-1}-l_m}
\det (A_1A_2)_{[l_m]}^{[l_2]\setminus J_2}
\det (A_3)_{[l_2]\setminus J_2}^{[l_3]\setminus J_3}\cdots
\det (A_{m})_{[l_{m-1}]\setminus J_{m-1}}^{[l_m]}.
\label{ebd}
\end{align}
\vskip0.1in

\parindent0pt
Repeating the argument that gave \eqref{ebd} $m-2$ more times yields equation \eqref{eba}. 

\parindent12pt
\medskip
$(${\rm b}$)$. If $G$ is a cylindrical fabric graph, the same reasoning that gave \eqref{ebc}
leads to
\begin{align}
&
\M(G;x_1,\dotsc,x_m)=
\nonumber
\\[10pt]
&\ \ 
\sum_{J_1\subset[l_1],\dotsc,J_{m}\subset[l_{m}]\atop |J_i|-|J_{i-1}|=l_i-l_{i-1},\ i=1,\dotsc,m}
x_1^{|J_1|}\cdots x_m^{|J_m|}
%\nonumber
%\\[10pt]
%&\ \ 
%\times
\det (A_1)_{[l_m]\setminus J_m}^{[l_1]\setminus J_1}
\det (A_2)_{[l_1]\setminus J_1}^{[l_2]\setminus J_2}\cdots
\det (A_{m})_{[l_{m-1}]\setminus J_{m-1}}^{[l_m]\setminus J_m}.
\label{ebe}
\end{align}  

Using repeatedly the Cauchy-Binet theorem as in the proof of part (a), and the fact that the conditions on the second row under the summation in \eqref{ebe} are equivalent to $|J_i|=|J_m|+l_i-l_m$, we obtain from \eqref{ebe} that

\begin{align}
\M(G;x_1,\dotsc,x_m)&=
x_1^{l_1-l_m}\cdots x_{m-1}^{l_{m-1}-l_m}
\sum_{J_{m}\subset[l_{m}]}
(x_1\cdots x_m)^{|J_m|}
\det (A_1A_2\cdots A_m)_{[l_m]\setminus J_m}^{[l_m]\setminus J_m}
\nonumber
\\[10pt]
&=x_1^{l_1-l_m}\cdots x_{m-1}^{l_{m-1}-l_m}
\sum_{J\subset[l_{m}]}
(x_1\cdots x_m)^{l_m-|J|}
\det (A_1A_2\cdots A_m)_{J}^{J}.
\label{ebj}
\end{align}  
However, by Lemma \ref{tbd}, the right hand side above is the same as the right hand side of equation~\eqref{ebb}. \hfill $\square$

%\newpage

\section{Honeycomb cylinder graphs}
  
\begin{figure}[t]
\vskip0.1in
\centerline{
\hfill
{\includegraphics[width=0.40\textwidth]{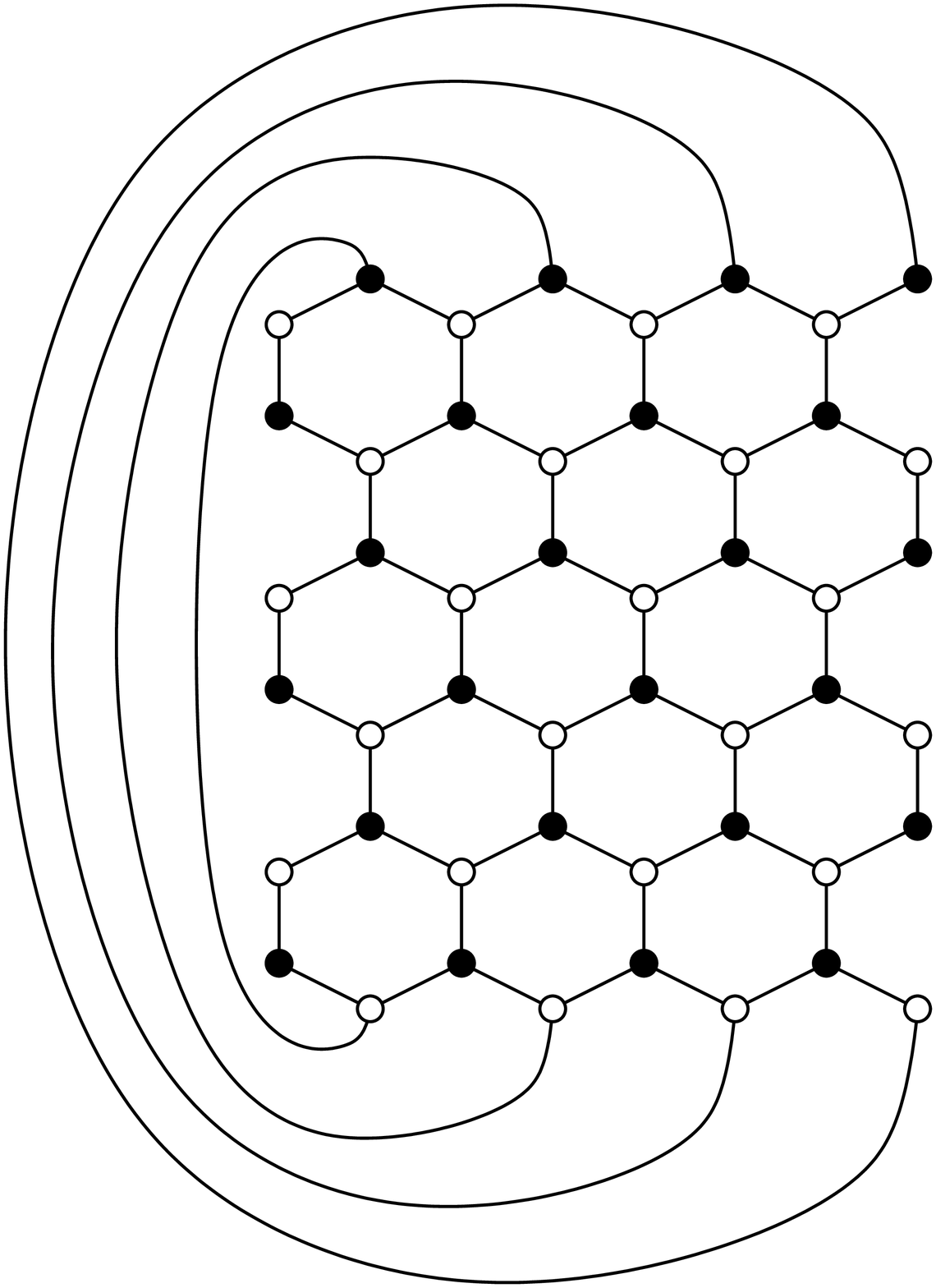}}
\hfill
{\includegraphics[width=0.43\textwidth]{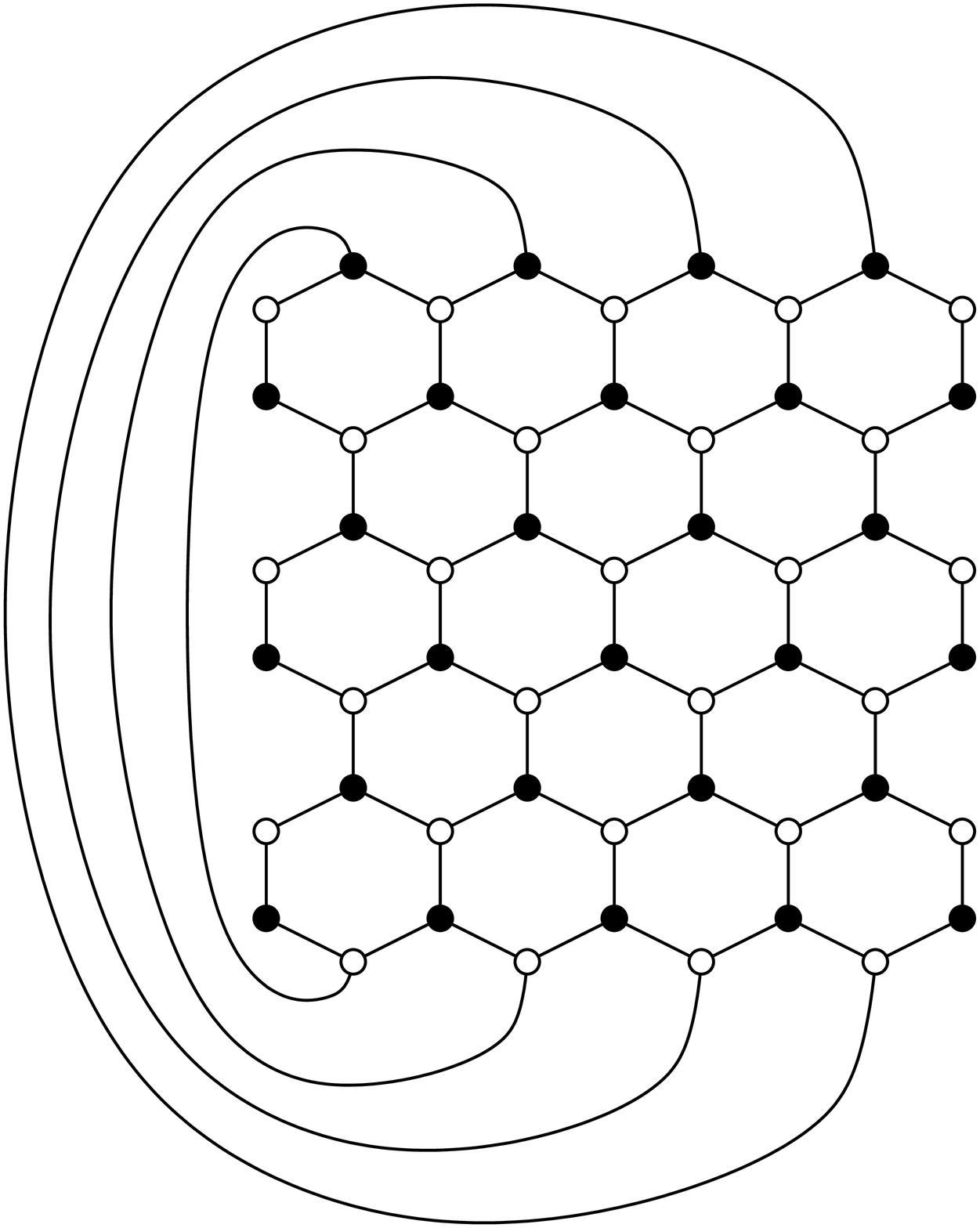}}
\hfill
}
%\vskip-0.1in
\caption{\label{fca} The graph $H_{m,n}$ for $m=6$, $n=8$ (left) and $m=6$, $n=9$ (right).}
%\vskip-0.15in
\end{figure}

In this section we consider the special case when the strands are paths of the same length. As we will see, in this case the determinant in Theorem \ref{tba} can be evaluated explicitly.

Define the {\it honeycomb cylinder graph} $H_{m,n}$ to be the cylindrical fabric graph with $m$ strands, each of which is a path with $n$ vertices (see Figure \ref{fca}). Note that the bottom and top strands of $H_{m,n}$ fit together ``seamlessly'' only when $m$ is even, in which case $H_{m,n}$ has $m/2$-fold rotational symmetry with respect to rotation around the cylinder in which it is naturally embedded. We will assume that $m$ is even throughout this section.

The main result of this section is the following.

\begin{theo}
\label{tca}
For $m$ even, the number of perfect matchings of the cylindrical honeycomb graph $H_{m,n}$ is given by
\begin{align}
  \M(H_{m,n};x_1,\dotsc,x_m)=
\begin{cases}
\hskip0.95in
\prod_{k=1}^{\lfloor n/2\rfloor}\left[x_1\cdots x_m+\left(2+2\cos{\frac{2k\pi}{n+1}}\right)^{m/2}\right], & n\ \text{\rm even},\\[10pt]
%\hphantom{---}
x_1x_3\cdots x_{m-1}
\prod_{k=1}^{\lfloor n/2\rfloor}\left[x_1\cdots x_m+\left(2+2\cos{\frac{2k\pi}{n+1}}\right)^{m/2}\right],
&n\ \text{\rm odd}.
\end{cases}
%\prod_{k=1}^{\lfloor n/2\rfloor}\left[x_1\cdots x_m+\left(2+2\cos{\frac{2k\pi}{n+1}}\right)^{m/2}\right]
%\nonumber
%\\[10pt]
%&=
%\sqrt{\prod_{k=1}^{n}\left[x_1\cdots x_m+\left(2+2\cos{\frac{2k\pi}{n+1}}\right)^{m/2}\right]}.
\label{eca}
\end{align}

\end{theo}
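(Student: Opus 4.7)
My plan is to apply Theorem~\ref{tba}(b) directly and then evaluate the resulting determinant by diagonalizing a product of bi-adjacency matrices via the classical spectrum of a path graph.

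First I would identify the bi-adjacency matrices $A_i$ of the strands of $H_{m,n}$. Each strand is a path on $n$ vertices whose vertices alternate between the bottom and top rows of the strand. When $n$ is even, every strand has exactly $n/2$ bottom and $n/2$ top vertices, so $l_0 = l_1 = \cdots = l_m = n/2$; when $n$ is odd, the strand shapes alternate around the cylinder --- the assumption that $m$ is even makes this alternation cylindrically consistent --- with $l_0 = l_2 = \cdots = l_m = (n-1)/2$ and $l_1 = l_3 = \cdots = l_{m-1} = (n+1)/2$. Letting $A$ denote the bi-adjacency matrix of an odd-indexed strand (a bidiagonal $\lfloor n/2\rfloor \times \lceil n/2\rceil$ matrix with ones on the main diagonal and on one off-diagonal), a direct inspection of the honeycomb picture shows that $A_i = A$ for $i$ odd and $A_i = A^T$ for $i$ even.

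I would then apply Theorem~\ref{tba}(b). The prefactor $x_1^{l_1-l_m}\cdots x_{m-1}^{l_{m-1}-l_m}$ is immediate: it equals $1$ for $n$ even (all $l_i$ equal), and equals $x_1 x_3\cdots x_{m-1}$ for $n$ odd (since $l_i - l_m \in \{0,1\}$ with the value $1$ precisely at odd $i$). Using the alternation and the fact that $m$ is even, the matrix product telescopes as
\[ A_1 A_2 \cdots A_m = (AA^T)^{m/2}, \]
so the determinant appearing in Theorem~\ref{tba}(b) becomes $\det\bigl((x_1\cdots x_m)\,I_{\lfloor n/2\rfloor} + (AA^T)^{m/2}\bigr)$.

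The heart of the argument is then to diagonalize $AA^T$. The key observation is that the bidiagonal matrix $A$ is itself the bi-adjacency matrix of the path $P_n$ (viewed as a bipartite graph whose two parts are the $\lfloor n/2\rfloor$ bottom and $\lceil n/2\rceil$ top vertices of a single strand). Consequently the block matrix with $A$ in the upper-right and $A^T$ in the lower-left (and zero diagonal blocks) is the full adjacency matrix of $P_n$, whose spectrum is the classical set $\{2\cos\frac{j\pi}{n+1} : j=1,\dotsc,n\}$. Since the nonzero eigenvalues of such a $2\times2$-block matrix are the $\pm\sigma_j$'s with $\sigma_j$ the singular values of $A$, it follows that the eigenvalues of $AA^T$ are $4\cos^2\frac{j\pi}{n+1} = 2+2\cos\frac{2j\pi}{n+1}$ for $j=1,\dotsc,\lfloor n/2\rfloor$. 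Raising to the $(m/2)$-th power and factoring the determinant over these eigenvalues produces exactly the product in \eqref{eca}, and combining with the prefactor from the previous paragraph gives the stated formula.

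I expect the main obstacle to be the combinatorial bookkeeping in the first step: pinning down how the strands of $H_{m,n}$ fit together for both parities of $n$, and verifying that consecutive bi-adjacency matrices really are $A$ and $A^T$ (as opposed to orthogonally equivalent matrices, which would still be correct individual strand matrices but could spoil the clean $(AA^T)^{m/2}$ structure of the product). Everything after that is standard linear algebra: Theorem~\ref{tba}(b), the bipartite-graph identification of $A$, and the classical path spectrum.
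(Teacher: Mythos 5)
Your argument is correct and follows the same overall skeleton as the paper's proof: identify the strand bi-adjacency matrices as $A$ and $A^T$, apply Theorem~\ref{tba}(b) to reduce everything to $\det\bigl((x_1\cdots x_m)I_{\lfloor n/2\rfloor}+(AA^T)^{m/2}\bigr)$ times the prefactor $x_1^{l_1-l_m}\cdots x_{m-1}^{l_{m-1}-l_m}$, and then factor the determinant over the eigenvalues of $AA^T$. Where you genuinely diverge is in how you obtain those eigenvalues. The paper computes the tridiagonal matrices $A_sA_s^T$ and $B_sB_s^T$ explicitly and determines their spectra via Lemma~\ref{tcb}; in the even-$n$ case this requires the extra part (b) of that lemma, whose proof runs through the identity $U_n(x)+U_{n-1}(x)=2\sin\bigl[(n+\tfrac12)\arccos x\bigr]\cos\bigl(\tfrac12\arccos x\bigr)/\sin(\arccos x)$ for Chebyshev polynomials of the second kind. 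You instead observe that $A$ is the bi-adjacency matrix of the path $P_n$ itself, so that $\bigl(\begin{smallmatrix}0&A\\A^T&0\end{smallmatrix}\bigr)$ is the full adjacency matrix of $P_n$ with classical spectrum $\{2\cos\frac{j\pi}{n+1}\}$, and the eigenvalues of $AA^T$ are the squares of the positive ones, i.e.\ $4\cos^2\frac{j\pi}{n+1}=2+2\cos\frac{2j\pi}{n+1}$ for $j=1,\dotsc,\lfloor n/2\rfloor$. This singular-value argument handles both parities of $n$ uniformly, needs only the classical path spectrum (Lemma~\ref{tcb}(a)), and dispenses with Lemma~\ref{tcb}(b) altogether; its only cost is the (standard, but worth stating) fact that the nonzero eigenvalues of the block anti-diagonal dilation of $A$ are $\pm$ its singular values, together with the rank count showing that $AA^T$ is nonsingular so that all $\lfloor n/2\rfloor$ of its eigenvalues are accounted for. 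Incidentally, your identity matrix $I_{\lfloor n/2\rfloor}$ in the odd-$n$ case is the correct order $l_m=(n-1)/2$; the paper's equation \eqref{eco} writes $I_{(n+1)/2}$, which appears to be a typo.
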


\parindent0pt
{\it Remark $1$.} The floor can be avoided at the upper limit of the product by writing
\begin{align}
\prod_{k=1}^{\lfloor n/2\rfloor}\left[x_1\cdots x_m+\left(2+2\cos{\frac{2k\pi}{n+1}}\right)^{m/2}\right]
%\nonumber
%\\[10pt]
%&
=
\sqrt{\prod_{k=1}^{n}\left[x_1\cdots x_m+\left(2+2\cos{\frac{2k\pi}{n+1}}\right)^{m/2}\right]}.
\end{align}

In our proof we will employ the following lemma.

\parindent12pt

\begin{lem}
\label{tcb}
$(${\rm a}$)$. The eigenvalues of the $n\times n$ matrix
\begin{equation}
\left[\begin{matrix}
0 & 1 & 0 & \cdot & \cdot & \cdot & 0 & 0 & 0\\
1 & 0 & 1 & \cdot & \cdot & \cdot & 0 & 0 & 0\\
0 & 1 & 0 & \cdot & \cdot & \cdot & 0 & 0 & 0\\
\cdot & \cdot & \cdot & \cdot & \cdot & \cdot & \cdot & \cdot & \cdot\\
\cdot & \cdot & \cdot & \cdot & \cdot & \cdot & \cdot & \cdot & \cdot\\
\cdot & \cdot & \cdot & \cdot & \cdot & \cdot & \cdot & \cdot & \cdot\\
0 & 0 & 0 & \cdot & \cdot & \cdot & 0 & 1 & 0\\
0 & 0 & 0 & \cdot & \cdot & \cdot & 1 & 0 & 1\\
0 & 0 & 0 & \cdot & \cdot & \cdot & 0 & 1 & 0\\
\end{matrix}\right]
\label{ecb}
\end{equation}
are
\begin{equation}
2\cos\frac{k\pi}{n+1},\ \ \ k=1,\dotsc,n.
\label{ecc}
\end{equation}

\parindent0pt
$(${\rm b}$)$. The eigenvalues of the $n\times n$ matrix
\begin{equation}
\left[\begin{matrix}
0 & 1 & 0 & \cdot & \cdot & \cdot & 0 & 0 & 0\\
1 & 0 & 1 & \cdot & \cdot & \cdot & 0 & 0 & 0\\
0 & 1 & 0 & \cdot & \cdot & \cdot & 0 & 0 & 0\\
\cdot & \cdot & \cdot & \cdot & \cdot & \cdot & \cdot & \cdot & \cdot\\
\cdot & \cdot & \cdot & \cdot & \cdot & \cdot & \cdot & \cdot & \cdot\\
\cdot & \cdot & \cdot & \cdot & \cdot & \cdot & \cdot & \cdot & \cdot\\
0 & 0 & 0 & \cdot & \cdot & \cdot & 0 & 1 & 0\\
0 & 0 & 0 & \cdot & \cdot & \cdot & 1 & 0 & 1\\
0 & 0 & 0 & \cdot & \cdot & \cdot & 0 & 1 & -1\\
\end{matrix}\right]
\label{ecd}
\end{equation}
are
\begin{equation}
2\cos\frac{2k\pi}{2n+1},\ \ \ k=1,\dotsc,n.
\label{ece}
\end{equation}

\end{lem}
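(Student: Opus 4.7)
The plan is to produce explicit eigenvectors of sinusoidal shape. For a tridiagonal matrix $M$ of either of the two shapes in the statement, the interior rows of the eigenvalue equation $Mv=\lambda v$ give the three-term recurrence
\[
v_{j-1}+v_{j+1}=\lambda v_j,\qquad 2\le j\le n-1.
\]
Writing $\lambda=2\cos\theta$ and trying $v_j=\sin(j\theta)$ solves this recurrence identically, and the top-row equation $v_2=\lambda v_1$ is automatically satisfied via $\sin(2\theta)=2\cos\theta\sin\theta$ (equivalently, by extending the sequence by $v_0=0$ so the recurrence holds at $j=1$ as well). All that remains is to read off the admissible $\theta$'s from the bottom row.

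For part (a), the bottom row gives $v_{n-1}=\lambda v_n$; combined with $v_{n-1}=\lambda v_n-v_{n+1}$ from the recurrence run at $j=n$, this is equivalent to $v_{n+1}=0$, i.e.\ $\sin((n+1)\theta)=0$. The solutions in $(0,\pi)$ are $\theta=k\pi/(n+1)$ for $k=1,\dots,n$, producing the $n$ distinct values $2\cos(k\pi/(n+1))$ of \eqref{ecc}. Since the matrix is real symmetric of order $n$, these $n$ distinct values are the entire spectrum.

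For part (b), the bottom row instead gives $v_{n-1}-v_n=\lambda v_n$. Combining with $v_{n-1}=\lambda v_n-v_{n+1}$ reduces this to the clean boundary condition $v_{n+1}=-v_n$, i.e.\ $\sin((n+1)\theta)+\sin(n\theta)=0$. Applying sum-to-product gives
\[
2\sin\!\left(\frac{(2n+1)\theta}{2}\right)\cos\!\left(\frac{\theta}{2}\right)=0.
\]
Discarding $\cos(\theta/2)=0$ (which forces $v_j\equiv 0$), we get $\theta=2k\pi/(2n+1)$ for $k=1,\dots,n$, giving the $n$ distinct values listed in \eqref{ece}. The matrix is still real symmetric (the only change from (a) is a $-1$ on the diagonal), so these again exhaust the spectrum.

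The only step that requires any care is the manipulation in part (b) that turns the modified bottom-row relation into the symmetric-looking condition $v_{n+1}=-v_n$; once this is done, the sum-to-product identity does the rest and counting solutions in $(0,\pi)$ is immediate.
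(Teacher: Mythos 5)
Your proof is correct, but it takes a different route from the paper's. The paper works with characteristic polynomials: it uses column-linearity of the determinant to show that the characteristic polynomial of the matrix in (b) is $q_n(\lambda)=p_n(\lambda)+p_{n-1}(\lambda)$, where $p_n$ is the characteristic polynomial of the path (cited from the literature as $U_n(\lambda/2)$, a Chebyshev polynomial of the second kind), and then locates the zeros of $U_n+U_{n-1}$ via the same sum-to-product identity you use; part (a) is simply cited as classical. You instead exhibit explicit sinusoidal eigenvectors $v_j=\sin(j\theta)$ and read the admissible $\theta$'s off the boundary rows, reducing (a) to $v_{n+1}=0$ and (b) to $v_{n+1}=-v_n$. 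The underlying trigonometry is identical --- $\sin((n+1)\theta)+\sin(n\theta)=2\sin\bigl(\tfrac{(2n+1)\theta}{2}\bigr)\cos\bigl(\tfrac{\theta}{2}\bigr)$ is exactly the identity in the paper's display \eqref{eci} --- but your framing buys self-containedness (no appeal to the reference for either part, and no need for the determinant decomposition) and gives the eigenvectors explicitly, while the paper's version generalizes more readily to perturbations that change the recurrence for the characteristic polynomial rather than just a boundary entry. Your completeness argument (an $n\times n$ real symmetric matrix with $n$ distinct candidate eigenvalues has exactly these as its spectrum) is sound, as is the discarding of $\cos(\theta/2)=0$, which forces $\theta=\pi$ and hence the zero vector.
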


\parindent12pt

\begin{proof} Part (a) is a classical result. See for instance \cite[Section\ 2.6,\ \#7]{CDS}.

Denote by $q_n(\lambda)$ the characteristic polynomial of the matrix \eqref{ecd}. Regarding the last column of the defining determinant for $q_n(\lambda)$ as a sum of two columns and using the linearity of the determinant, one readily obtains that
\begin{equation}
q_n(\lambda)=p_n(\lambda)+p_{n-1}(\lambda),
\label{ecf}
\end{equation}  
where $p_n(\lambda)$ is the characteristic polynomial of the matrix \eqref{ecb}. However, the latter is just the characteristic polynomial of a path of length $n$. By \cite[Section\ 2.6,\ \#7]{CDS}, we have $p_n(\lambda)=U_n(\lambda/2)$, where $U_n(\lambda)$ is the Chebyshev polynomial of the second kind, given by
\begin{equation}
U_n(x)=\frac{\sin[(n+1)\arccos x]}{\sin(\arccos x)}.
\label{ecg}
\end{equation}  
The zeros of $q_n(\lambda)$ are then the zeros of $U_n(\lambda/2)+U_{n-1}(\lambda/2)$. We have
\begin{align}
U_n(x)+U_{n-1}(x)
&=\frac{\sin[(n+1)\arccos x]}{\sin(\arccos x)}+\frac{\sin[n\arccos x]}{\sin(\arccos x)}
\nonumber
\\[10pt]
&=\frac
{2\sin\left[\left(n+\frac12\right)\arccos x \right]\cos\left(\frac12\arccos x\right)}
{\sin(\arccos x)}.
\label{eci}
\end{align}  
Thus the zeros of $U_n(x)+U_{n-1}(x)$ are $\cos\frac{2k\pi}{2n+1}$, $k=1,\dotsc,n$, and those of
$q_n(\lambda)=U_n(\lambda/2)+U_{n-1}(\lambda/2)$ are $2\cos\frac{2k\pi}{2n+1}$, $k=1,\dotsc,n$. This proves part (b).
\end{proof}

\medskip
\parindent0pt
{\it Proof of Theorem $\ref{tca}$.} The details of the proof depend on the parity of $n$. We treat first the case when $n$ is even. Then the cylindrical hexagonal graph $H_{m,n}$ looks like in the picture on the left in Figure \ref{fca}. Let $A_n$ be the $n\times n$ matrix
\begin{equation}
A_n=
\left[\begin{matrix}
1 & 1 & 0 & \cdot & \cdot & \cdot & 0 & 0 & 0\\
0 & 1 & 1 & \cdot & \cdot & \cdot & 0 & 0 & 0\\
0 & 0 & 1 & \cdot & \cdot & \cdot & 0 & 0 & 0\\
\cdot & \cdot & \cdot & \cdot & \cdot & \cdot & \cdot & \cdot & \cdot\\
\cdot & \cdot & \cdot & \cdot & \cdot & \cdot & \cdot & \cdot & \cdot\\
\cdot & \cdot & \cdot & \cdot & \cdot & \cdot & \cdot & \cdot & \cdot\\
0 & 0 & 0 & \cdot & \cdot & \cdot & 1 & 1 & 0\\
0 & 0 & 0 & \cdot & \cdot & \cdot & 0 & 1 & 1\\
0 & 0 & 0 & \cdot & \cdot & \cdot & 0 & 0 & 1\\
\end{matrix}\right].
\label{ecj}
\end{equation}
Then if we denote the strands of $H_{m,n}$ by $S_1,\dotsc,S_m$ starting from the bottom, the bi-adjacency matrix of strand $S_i$ is $A_{n/2}$ for odd $i$, and its transpose $A_{n/2}^T$ for even $i$. Therefore, as each strand has $n/2$ black vertices, by Theorem \ref{tba}(b) we obtain
\begin{equation}
\M(H_{m,n};x_1,\dotsc,x_m)=\det\left(x_1\cdots x_m I_{n/2} + (A_{n/2}A_{n/2}^T)^{m/2}\right).
\label{eck}
\end{equation}  
We have
\begin{equation}
A_sA_s^T=
\left[\begin{matrix}
2 & 1 & 0 & \cdot & \cdot & \cdot & 0 & 0 & 0\\
1 & 2 & 1 & \cdot & \cdot & \cdot & 0 & 0 & 0\\
0 & 1 & 2 & \cdot & \cdot & \cdot & 0 & 0 & 0\\
\cdot & \cdot & \cdot & \cdot & \cdot & \cdot & \cdot & \cdot & \cdot\\
\cdot & \cdot & \cdot & \cdot & \cdot & \cdot & \cdot & \cdot & \cdot\\
\cdot & \cdot & \cdot & \cdot & \cdot & \cdot & \cdot & \cdot & \cdot\\
0 & 0 & 0 & \cdot & \cdot & \cdot & 2 & 1 & 0\\
0 & 0 & 0 & \cdot & \cdot & \cdot & 1 & 2 & 1\\
0 & 0 & 0 & \cdot & \cdot & \cdot & 0 & 1 & 1\\
\end{matrix}\right].
\label{ecl}
\end{equation}
Therefore, by Lemma \ref{tcb}(b) we see that the eigenvalues of $A_sA_s^T$ are $2+2\cos\frac{2k\pi}{2s+1}$, $k=1,\dotsc,s$. Since these are distinct, $A_sA_s^T$ is diagonalizable. It follows that the eigenvalues of the matrix $(A_sA_s^T)^{m/2}$ are $\left(2+2\cos\frac{2k\pi}{2s+1}\right)^{m/2}$, $k=1,\dotsc,s$.

\parindent12pt
For an $s\times s$ matrix $A$ with characteristic polynomial $p_A$ and eigenvalues $x_1,\dotsc,x_s$, we have
\begin{align}
\det(xI_s+A)&=(-1)^s\det(-xI_s-A)
\nonumber
\\
&=(-1)^sp_A(-x)
\nonumber
\\
&=(-1)^s(-x-x_1)\cdots(-x-x_s)
\nonumber
\\
&=(x+x_1)\cdots(x+x_s).
\label{ecm}
\end{align}  
Then \eqref{eca} follows from \eqref{eck}, \eqref{ecm} and the above identification of the eigenvalues of $A_sA_s^T$.

Consider now the case when $n$ is odd. Then the cylindrical hexagonal graph $H_{m,n}$ looks like in the picture on the right in Figure \ref{fca}. Let $B_n$ be the $n\times (n+1)$ matrix
\begin{equation}
B_n=
\left[\begin{matrix}
1 & 1 & 0 & \cdot & \cdot & \cdot & 0 & 0 & 0\\
0 & 1 & 1 & \cdot & \cdot & \cdot & 0 & 0 & 0\\
0 & 0 & 1 & \cdot & \cdot & \cdot & 0 & 0 & 0\\
\cdot & \cdot & \cdot & \cdot & \cdot & \cdot & \cdot & \cdot & \cdot\\
\cdot & \cdot & \cdot & \cdot & \cdot & \cdot & \cdot & \cdot & \cdot\\
\cdot & \cdot & \cdot & \cdot & \cdot & \cdot & \cdot & \cdot & \cdot\\
0 & 0 & 0 & \cdot & \cdot & \cdot & 1 & 1 & 0\\
0 & 0 & 0 & \cdot & \cdot & \cdot & 0 & 1 & 1\\
\end{matrix}\right].
\label{ecn}
\end{equation}
Then the bi-adjacency matrix of strand $S_i$ is $B_{(n-1)/2}$ for odd $i$, and its transpose $B_{(n-1)/2}^T$ for even $i$. Therefore, since odd-index strands have $(n+1)/2$ black vertices while even-indexed ones have $(n-1)/2$ black vertices, by Theorem \ref{tba}(b) we obtain
\begin{equation}
  \M(H_{m,n};x_1,\dotsc,x_m)=
  x_1x_3\cdots x_{m-1}
  \det\left(x_1\cdots x_m I_{(n+1)/2} + (B_{(n-1)/2}B_{(n-1)/2}^T)^{m/2}\right).
\label{eco}
\end{equation}  
The form of the matrix raised to the power $m/2$ in \eqref{eco} is now
\begin{equation}
B_sB_s^T=
\left[\begin{matrix}
2 & 1 & 0 & \cdot & \cdot & \cdot & 0 & 0 & 0\\
1 & 2 & 1 & \cdot & \cdot & \cdot & 0 & 0 & 0\\
0 & 1 & 2 & \cdot & \cdot & \cdot & 0 & 0 & 0\\
\cdot & \cdot & \cdot & \cdot & \cdot & \cdot & \cdot & \cdot & \cdot\\
\cdot & \cdot & \cdot & \cdot & \cdot & \cdot & \cdot & \cdot & \cdot\\
\cdot & \cdot & \cdot & \cdot & \cdot & \cdot & \cdot & \cdot & \cdot\\
0 & 0 & 0 & \cdot & \cdot & \cdot & 2 & 1 & 0\\
0 & 0 & 0 & \cdot & \cdot & \cdot & 1 & 2 & 1\\
0 & 0 & 0 & \cdot & \cdot & \cdot & 0 & 1 & 2\\
\end{matrix}\right].
\label{ecl}
\end{equation}
Thus, in this case, by Lemma \ref{tcb}(a) we see that the eigenvalues of $B_sB_s^T$ are $2+2\cos\frac{k\pi}{s+1}$, $k=1,\dotsc,s$. These are again distinct, so the eigenvalues of the matrix $(B_sB_s^T)^{m/2}$ are $\left(2+2\cos\frac{k\pi}{s+1}\right)^{m/2}$, $k=1,\dotsc,s$. Proceeding as in part (a) we are again led to formula \eqref{eca}.~\hfill$\square$

\parindent0pt
\medskip
{\it Remark $2$.} The formula provided by Theorem \ref{tca} is a TFK-style formula for a family of honeycomb graphs. This is an unusual situation, as virtually all explicit product formulas in the literature for honeycomb style graphs are ``round formulas,'' in the sense that the size of the factors is linear in the parameters.

\medskip
{\it Remark $3$.} The free energy per site for the family $H_{m,n}$ of honeycomb cylinder graphs turns out to be the same as for toroidal honeycombs, which is known to be maximal. This is in contrast with the family of centrally symmetric honeycombs whose perfect matching enumeration is equivalent to MacMahon's boxed plane partition theorem \cite{MacM}.

\medskip
{\it Remark $4$.} Our graph $H_{m,n}$ has the structure of a nanotube with ``armchair boundary'' (see e.g. \cite{nanotubes}). The other natural type of boundary, zig-zag boundary, is not so interesting from the point of view of perfect matching enumeration, as it simply yields a honeycomb cylinder whose number of perfect matchings is 2 to the number of strands.

%\bigskip
%Go through list of points to make after intro and mention here appropriate remarks. These include:

%1. Hybrid hex/rect; formula kind of Kasteleyn-style

%2. Full entropy (still need to prove this?), unlike Macmahon's formula

%3. $M_{m,n}$ is nanotube with armchair boundary. Note that nanotubes with zigzag boundary are trivial from point of view of enumerating their perfect matchings.

\parindent12pt

%\newpage

\section{Cylindric plane partitions and periodic cliffs}

\begin{figure}[t]
%\vskip0.1in
\centerline{
\hfill
{\includegraphics[width=0.55\textwidth]{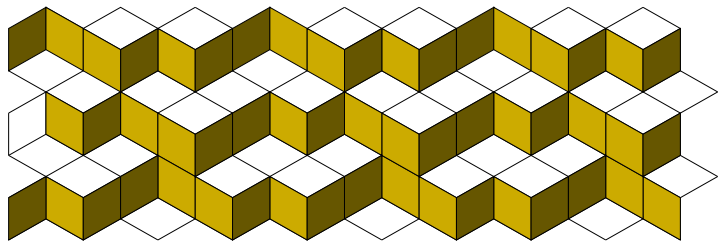}}
\hfill
}
\vskip-0.1in
\caption{\label{fda} An $m$-periodic cliff of height $n$ and horizontal displacement $s$ for $m=3$, $n=3$, $s=4$.}
%\vskip-0.15in
\end{figure}

\begin{figure}[t]
\vskip0.1in
\centerline{
\hfill
{\includegraphics[width=0.55\textwidth]{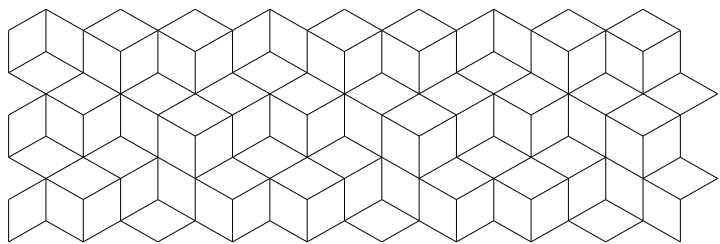}}
\hfill
}
\vskip-0.1in
\caption{\label{fdb}  The corresponding periodic lozenge tiling; southern boundary is translation of northern boundary $2n+s$ units in the polar direction $-\pi/3$.}
%\vskip-0.15in
\end{figure}

\begin{figure}[t]
\vskip0.1in
\centerline{
\hfill
{\includegraphics[width=0.55\textwidth]{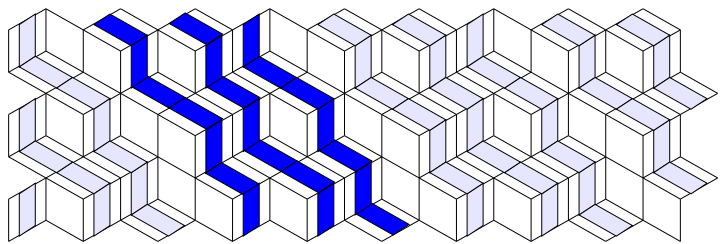}}
\hfill
}
\vskip-0.1in
\caption{\label{fdc} Paths that determine the periodic lozenge tiling; each traverses $s$ horizontal lozenges.}
%\vskip-0.15in
\end{figure}

\begin{figure}[t]
\vskip0.1in
\centerline{
\hfill
{\includegraphics[width=0.55\textwidth]{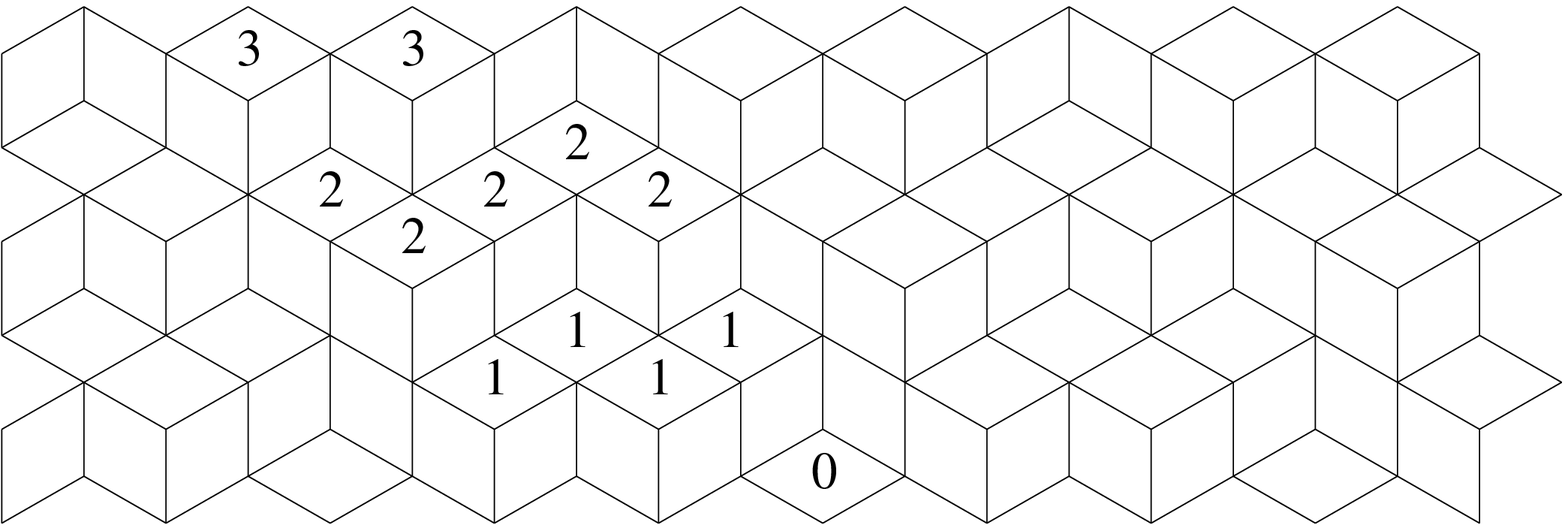}}
\hfill
{\includegraphics[width=0.20\textwidth]{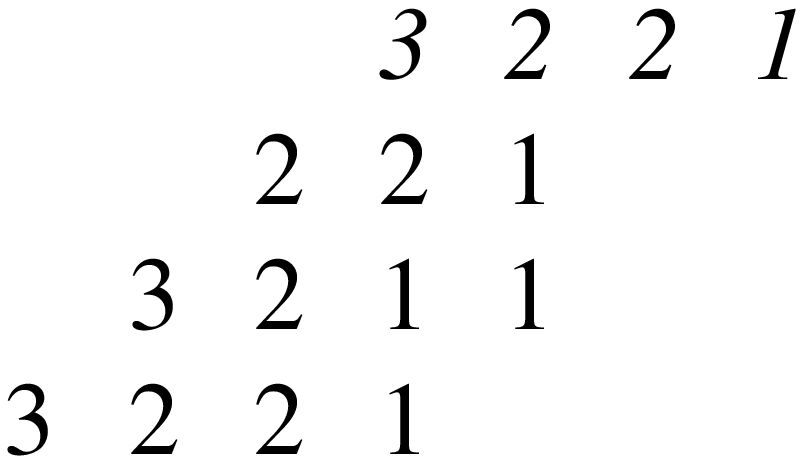}}
\hfill
}
\vskip-0.1in
\caption{\label{fdd} The cliff heights on the horizontal lozenges traversed by the paths determine the paths (left); the corresponding cylindric plane partition (right).}
%\vskip-0.15in
\end{figure}

\parindent0pt

We define a {\it cliff} to be a stepped surface whose projection on the triangular lattice is bounded by two parallel infinite zigzags (see Figure \ref{fda} for an example). We say that the cliff is {\it $m$-periodic} if it is invariant under translation by $m{\bold v}$, where ${\bold v}$ is the shortest non-zero vector such that translation by it leaves the zigzags invariant. The {\it height} of a cliff is the difference between the heights of the horizontal planes containing its bounding zigzags. A cliff can be viewed as consisting of slices composed of unit cubes cut out by equidistant parallel planes one unit apart; the {\it horizontal displacement} of a cliff is the number of horizontal faces in each such slice. The cliff pictured in Figure \ref{fda} is $3$-periodic, has height 3 and horizontal displacement 4.

\parindent12pt
Cylindric plane partitions were introduced by Gessel and Krattenthaler in \cite{GK}. For Young diagrams $\mu\subset\lambda$ and a positive integer $d$, a cylindric plane partition of shape $\lambda/\mu/d$ is a filling of the skew Young diagram $\lambda/\mu$ with non-negative integers that weakly decrease along rows and columns, with the additional property that when the bottom row is copied above the top row and translated $d$ units to the right, the resulting extended array is still weakly decreasing along columns (see Figure \ref{fdd} for an example).

The main result of this section is the following.

\begin{theo}
\label{tda}
There are as many
$m$-periodic cliffs of height $n$ and horizontal displacement $s$
as 
cylindric partitions of shape $(s+m-1,s+m-2,\dotsc,s)/(m-1,m-2,\dotsc,m)/m$ with entries less or equal than $n$.
%and the number of $m$-periodic cliffs of height $n$ and horizontal displacement $s$
The common number is
\begin{equation}
[x^{\lfloor s/2 \rfloor}]\prod_{k=1}^{n+\lfloor s/2 \rfloor}
\left[x+\left(2+2\cos\frac{2k\pi}{2n+s+1}\right)^m\right],
\label{eda}
\end{equation}
where $[x^k]p(x)$ denotes the coefficient of $x^k$ in $p(x)$.

\end{theo}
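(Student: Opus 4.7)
The plan is to prove Theorem \ref{tda} in two stages. First, I would establish bijective correspondences linking the three combinatorial families---$m$-periodic cliffs, lozenge tilings of a certain cylindrical region, and cylindric partitions of the shape stated in the theorem---following the chain of constructions illustrated in Figures \ref{fda}--\ref{fdd}. Second, I would apply Theorem \ref{tca} to the associated honeycomb cylinder graph, specialize the edge weights to extract a single statistic, and read off the coefficient formula \eqref{eda}.

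For the bijections, I would first send an $m$-periodic cliff to the lozenge tiling of an infinite periodic strip obtained by projecting the stepped surface along the $(1,1,1)$-direction (Figure \ref{fdb}). The $m$-periodicity makes this tiling invariant under translation along the strip, so the quotient is a lozenge tiling of a cylindrical honeycomb region whose dimer dual is exactly $H_{2m,2n+s}$: one vertical cliff period accounts for two rows of hexagons (hence $2m$ strands), while the height $n$ and horizontal displacement $s$ together yield strand length $2n+s$. Independently, threading the standard non-intersecting lattice paths through the horizontal lozenges of the tiling (Figure \ref{fdc}) and recording, along each path, the cliff heights of the horizontal lozenges it traverses (Figure \ref{fdd}) produces the bijection with cylindric partitions in the sense of \cite{GK} of the indicated shape, with entries bounded above by $n$. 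Because each of the $m$ paths in a fundamental domain crosses exactly $s$ horizontal lozenges, there are $ms$ horizontal lozenges per period, and these correspond precisely to the vertical (inter-strand) edges in the associated matching of $H_{2m,2n+s}$.

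For the counting step, I would apply Theorem \ref{tca} with $m$ replaced by $2m$ and $n$ replaced by $2n+s$, specialized at $x_1=\cdots=x_{2m}=y$, so that $y$ records the total number of vertical edges of a matching. Under this substitution, $x_1\cdots x_{2m}$ becomes $y^{2m}$, and the odd-strand prefactor $x_1x_3\cdots x_{2m-1}$ appearing in the $s$-odd branch of \eqref{eca} collapses to $y^m$. Setting $x:=y^{2m}$ then reduces the right-hand side of \eqref{eca} (up to the optional factor $y^m$) to the polynomial
\[
P(x)\;=\;\prod_{k=1}^{n+\lfloor s/2\rfloor}\Bigl[x+\Bigl(2+2\cos\tfrac{2k\pi}{2n+s+1}\Bigr)^{m}\Bigr].
\]
Extracting $[y^{ms}]$---the degree picking out matchings with $ms$ vertical edges, and hence cliffs of displacement $s$ (and therefore height $n$)---becomes, in both parities of $s$, the extraction $[x^{\lfloor s/2\rfloor}]P(x)$, which is \eqref{eda}.

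The main obstacle I anticipate is the careful bookkeeping of the three bijections in the cylindric setting, especially pinning down the correct honeycomb cylinder dimensions $(2m,2n+s)$ and the precise cylindric-partition shape (in particular the inner shape, where the displayed expression looks to have a typographical glitch). A secondary difficulty is tracking the parity of $2n+s$ cleanly so that the single coefficient extraction $[x^{\lfloor s/2\rfloor}]$ works uniformly in both branches of Theorem \ref{tca}. Once these structural points are in place, the algebraic part follows directly from Theorem \ref{tca} and the specialization $x=y^{2m}$.
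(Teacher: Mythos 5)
Your proposal is correct and follows essentially the same route as the paper: the same chain of bijections (cliffs $\to$ periodic lozenge tilings of $Z_{2n+s}$ $\to$ perfect matchings of $H_{2m,2n+s}$ with $ms$ vertical edges $\to$ non-intersecting lozenge paths $\to$ cylindric partitions, as in Proposition \ref{tdb}), followed by specializing Theorem \ref{tca} at equal vertical weights and extracting the coefficient of $y^{ms}$, which in both parities of $s$ reduces to $[x^{\lfloor s/2\rfloor}]$ after setting $x=y^{2m}$. The details you flag (the honeycomb dimensions $(2m,2n+s)$ and the parity bookkeeping) are handled in the paper exactly as you describe.
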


\begin{figure}[t]
\vskip0.1in
\centerline{
%\hfill
{\includegraphics[width=0.45\textwidth]{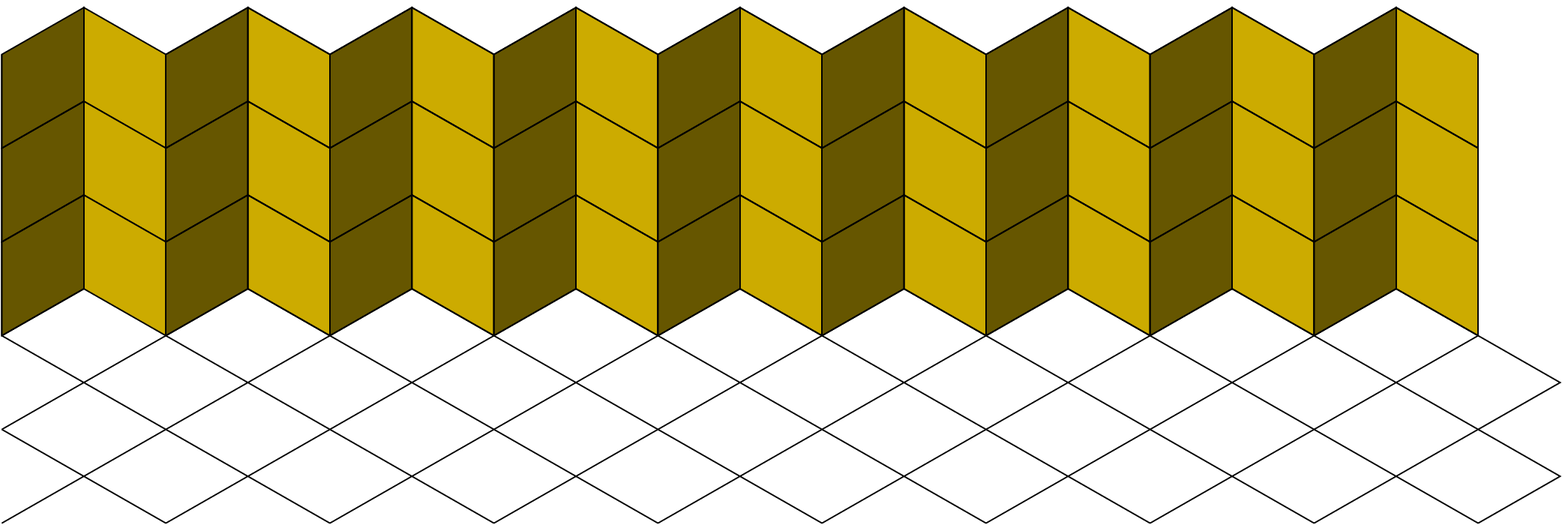}}
\hfill
{\includegraphics[width=0.45\textwidth]{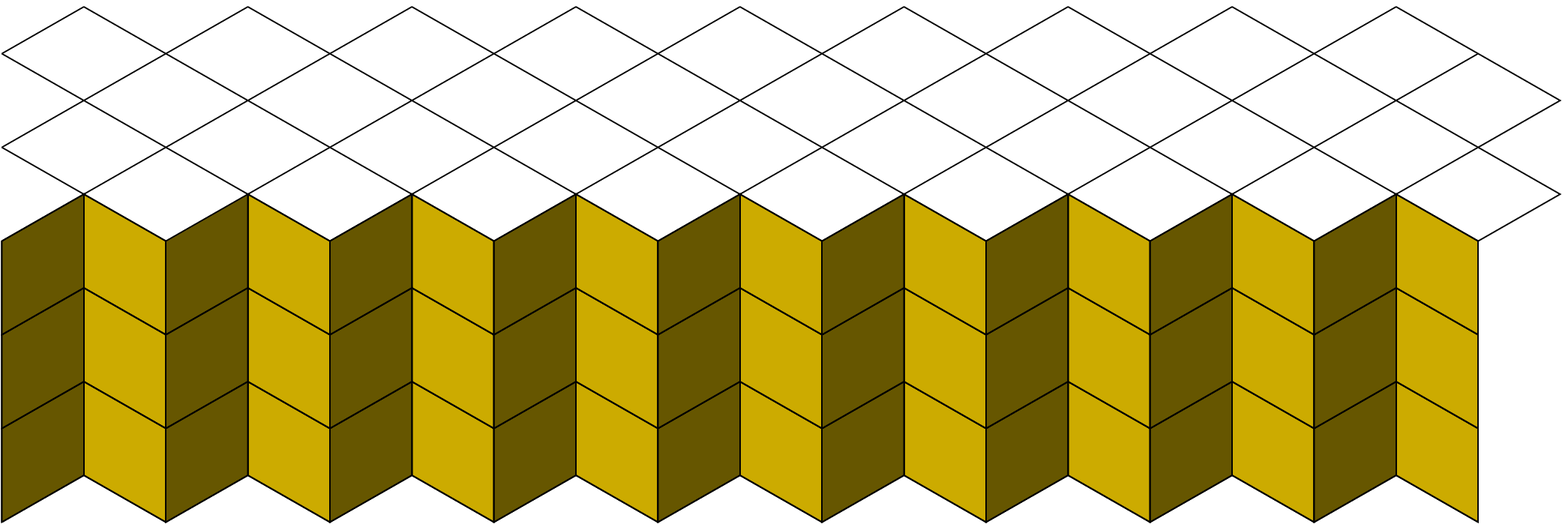}}
%\hfill
}
\vskip-0.1in
\caption{\label{fde} The lowermost (left) and uppermost (right) periodic cliff of height~$n$ and horizontal displacement $s$, for $n=3$, $s=4$.}
%\vskip-0.15in
\end{figure}

\parindent0pt
\medskip
{\it Remark $5$.} The result in Theorem \ref{tda} seems to be the first explicit product formula in the literature for the number of cylindric partitions of a given shape and a given bound on the size of its entries.

\parindent12pt
It can be interpreted as giving the number of $m$-periodic stepped surfaces that fit in the infinite tube enclosed by the lowermost and uppermost surfaces shown in Figure \ref{fde}.
%It can be interpreted as the number of cylindric partitions that fit in a given cylindrical shell: The base is a parallelogram shaped skew Young diagram bounded by two staircases, turned into a cylinder by gluing together the bottom and top sides, and the height is the bound on the entries.
From this point of view, it is a counterpart of MacMahon's boxed plane partition theorem \cite{MacM} (and as far as the container shape is concerned, it resembles even more Proctor's theorem on the number of plane partitions that fit in a right prism with staircase shape base \cite{Proc}). 

\medskip

The proof will follow from Theorem \ref{tca} and Proposition \ref{tdb} below. Define the zigzag strip~$Z_k$ to be the region on the triangular lattice\footnote{ Drawn so that one family of lattice lines is vertical.} between two infinite horizontal zigzags, the lower being the translation of the upper $k$ units in the polar direction $-\pi/3$.

An edge of the triangular lattice parallel to the polar direction $-\pi/3$ is called a ``/''-edge. Given a lozenge tiling $T$ of $Z_k$ and a ``/''-edge $e$ on its top boundary, start at $e$ and follow lozenges containing a ``/''-edge in their boundary until a ``/''-edge is reached on the bottom boundary of $Z_k$. The resulting sequence of lozenges is called a {\it path of lozenges}.

\parindent12pt

\begin{prop}
\label{tdb}
The following are equinumerous:

$(1)$ $m$-periodic cliffs of height $n$ and horizontal displacement $s$

$(2)$ $m$-periodic lozenge tilings of the zigzag strip $Z_{2n+s}$ with $s$ horizontal lozenges along each path of lozenges 

$(3)$ perfect matchings of the honeycomb cylinder $H_{2m,2n+s}$ with $ms$ vertical edges 

$(4)$ $m$-periodic families of non-intersecting paths of lozenges in $Z_{2n+s}$  with $s$ horizontal lozenges on each path

$(5)$ cylindric partitions of shape $(s+m-1,s+m-2,\dotsc,s)/(m-1,m-2,\dotsc,m)/m$ with entries less or equal than $n$

\end{prop}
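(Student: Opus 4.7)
The plan is to establish the fivefold equinumeration by a chain of explicit bijections $(1) \leftrightarrow (2) \leftrightarrow (4) \leftrightarrow (5)$, together with $(2) \leftrightarrow (3)$, following the pictorial sequence in Figures~\ref{fda}--\ref{fdd}. Each bijection is a standard piece of lozenge-tiling / non-intersecting-paths combinatorics; the real work is verifying that the parameters $(m,n,s)$ translate consistently into the corresponding parameters of each of the five structures.

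For $(1) \leftrightarrow (2)$, I would use the classical projection of a stepped surface onto the triangular lattice in the $(1,1,1)$-direction: the three orthogonal face orientations of the cliff map to the three orientations of lozenges, while the height $n$ and horizontal displacement $s$ combine to give the strip width $2n+s$ (each vertical cliff step consumes two units in the polar direction $-\pi/3$, each horizontal face one). Periodicity is preserved, and the horizontal faces of the cliff become horizontal lozenges, $s$ per slice (equivalently, $s$ per path of lozenges). For $(2) \leftrightarrow (3)$, quotienting $Z_{2n+s}$ by its period $m\mathbf{v}$ yields a triangular region on a cylinder whose planar dual (one vertex per triangle, edges across shared sides) is precisely $H_{2m,2n+s}$: one period contains $2m$ rows of triangles, matching the $2m$ strands of length $2n+s$. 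Under this duality, lozenges correspond to matching edges, and horizontal lozenges correspond precisely to the vertical edges of the fabric graph, of which there are $ms$.

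For $(2) \leftrightarrow (4)$, I would invoke the standard rule that in any lozenge tiling of the zigzag strip, consecutive horizontal lozenges chain into non-intersecting paths running between the two boundary zigzags, each carrying exactly $s$ horizontal lozenges by the displacement constraint; the tiling is reconstructible from the family of paths since the remaining (non-horizontal) lozenges are forced in the complement. Finally $(4) \leftrightarrow (5)$ is the familiar encoding of non-intersecting lattice paths by a skew plane partition: I would record each of the $m$ paths in one period by the column-coordinates of its $s$ successive horizontal lozenges, read top to bottom. The non-intersection of paths translates into weak decrease down columns (and weak decrease along rows is automatic), the periodicity yields the cylindric closure condition with shift $m$, the strip width $2n+s$ yields the entry bound $\le n$, and the positions of the path starts and ends dictated by the bounding zigzags impose the outer shape $(s+m-1, s+m-2, \dotsc, s)$ and inner shape $(m-1, m-2, \dotsc, 0)$.

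The hardest part I expect is the bookkeeping of parameters through the chain --- verifying at each step that the strand number $2m$, the strand length $2n+s$, the horizontal-lozenge count $ms$, and the outer/inner shapes together with the cylindric shift all derive consistently from $(m,n,s)$. In particular, one must check that the triangular-honeycomb duality on one period of $Z_{2n+s}$ produces $2m$ and not $m$ strands, and that, after encoding the paths as a skew tableau, the column coordinates naturally land in the staircase intervals indicated by the claimed inner and outer shapes. Once these translations are pinned down, each individual bijection is routine, and preservation of the horizontal-lozenge / vertical-edge count along the entire chain is immediate from the definitions.
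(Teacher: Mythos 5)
Your proposal follows essentially the same route as the paper's proof: the same chain of bijections, with the $(1,1,1)$-projection for $(1)\leftrightarrow(2)$, the quotient-then-dualize step for $(2)\leftrightarrow(3)$, the lozenge-path decomposition for $(2)\leftrightarrow(4)$, and the encoding of the $m$ paths in one period by the positions of their $s$ horizontal lozenges for $(4)\leftrightarrow(5)$. The parameter bookkeeping you flag as the main risk works out exactly as you describe, and your level of detail matches that of the paper's own argument, so there is no gap to report.
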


\parindent12pt

\begin{proof} Consider a rectangular system of coordinates in which the faces of the stepped surface are parallel to the coordinate planes, and so that when viewed along the line $x=y=z$, the faces are seen as congruent rhombi with angles of $60^\circ$ and $120^\circ$. Projecting the stepped surface on a plane perpendicular to the line $x=y=z$ shows that the sets (1) and (2) are in one-to-one correspondence (see also \cite{DT}).

 Any lozenge tiling of the zigzag strip $Z_{2n+s}$ naturally defines an infinite family of paths of lozenges, obtained by starting at the ``/''-segments of the upper boundary, following along lozenges in the tiling, and ending at the ``/''-segments of the lower boundary. Since these paths come from a tiling, they are non-intersecting. This implies that paths starting at consecutive segments of the upper boundary end at consecutive segments of the lower boundary. This in turn implies that all these paths contain the same number of horizontal lozenges.
Given an $m$-periodic lozenge tiling of $Z_{2n+s}$ in which this common number is $s$, associate to it the previously described family of paths of lozenges. This is a bijection between sets (2) and (4). 

For a bijection between sets (2) and (3), note that $m$-periodic tilings of $Z_{2n+s}$ can be identified with tilings of the quotient of $Z_{2n+s}$ under the action of the horizontal translation that leaves them invariant. The dual of the quotient region is precisely the honeycomb cylinder $H_{2m,2n+s}$. Furthermore, if in the tiling each of the described paths of lozenges has $s$ horizontal lozenges, the total number of horizontal lozenges in the quotient region (which correspond to vertical edges in $H_{2m,2n+s}$) is $ms$. 

The bijection between (4) and (5) is indicated in Figure \ref{fdd}. Simply note that the whole $m$-periodic family of paths of lozenges is determined by $m$ consecutive paths, which in turn are determined by the sequences of heights of the horizontal lozenges in them. These form $m$ weakly decreasing sequences of non-negative integers. Arranging them in an array from bottom to top, with each successive row one unit further to the right and omitting the 0's, one obtains a cyclic partition of shape $(s+m-1,s+m-2,\dotsc,s)/(m-1,m-2,\dotsc,m)/m$. \end{proof}  

\parindent0pt

{\it Proof of Theorem $\ref{tda}$.} By Proposition \ref{tdb}, both the number of $m$-periodic cliffs of height $n$ and horizontal displacement $s$, and the number of cylindric partitions of shape $(s+m-1,s+m-2,\dotsc,s)/(m-1,m-2,\dotsc,m)/m$ with entries less or equal than $n$, is equal to the number of perfect matchings of the cylindrical honeycomb $H_{2m,2n+s}$ with $ms$ vertical edges.

\parindent12pt
Denote by $M(H_{m,n};x)$ the sum of the weights of the perfect matchings of $H_{m,n}$ when all vertical edges are weighted by $x$, and all other edges by 1. If $s$ is even, Theorem \ref{tca} gives 
\begin{equation}
M(H_{2m,2n+s};x)=
\prod_{k=1}^{n+s/2}\left[x^{2m}+\left(2+2\cos{\frac{2k\pi}{2n+s+1}}\right)^{m}\right].
\label{edb}
\end{equation}
The number of perfect matchings of $H_{2m,2n+s}$ with $ms$ vertical edges is then the coefficient of $x^{ms}=(x^{2m})^{s/2}$ in the product \eqref{edb}, which agrees with the right hand side of \eqref{eda}.

If on the other hand $s$ is odd, Theorem \ref{tca} gives 
\begin{equation}
M(H_{2m,2n+s};x)=x^m
\prod_{k=1}^{n+(s-1)/2}\left[x^{2m}+\left(2+2\cos{\frac{2k\pi}{2n+s+1}}\right)^{m}\right].
\label{edc}
\end{equation}
The number of perfect matchings $H_{2m,2n+s}$ with $ms$ vertical edges is now the coefficient of $x^{ms-m}=(x^{2m})^{(s-1)/2}$ in the product on the right hand side of \eqref{edc}, which again agrees with the right hand side of \eqref{eda}.
\epf

%\bigskip
%Again, go through list of points after intro and make here the remarks that belong here.  These include:

%1. First (sort of) explicit enumeration of cyl PP's (?)

%2. First enumeration of PP that is not fully round (well, is just gen fn..., as we need $[x^k]$

%\newpage

\section{Square cylinder graphs of even girth}

\begin{figure}[t]
%\vskip0.1in
\centerline{
\hfill
{\includegraphics[width=0.35\textwidth]{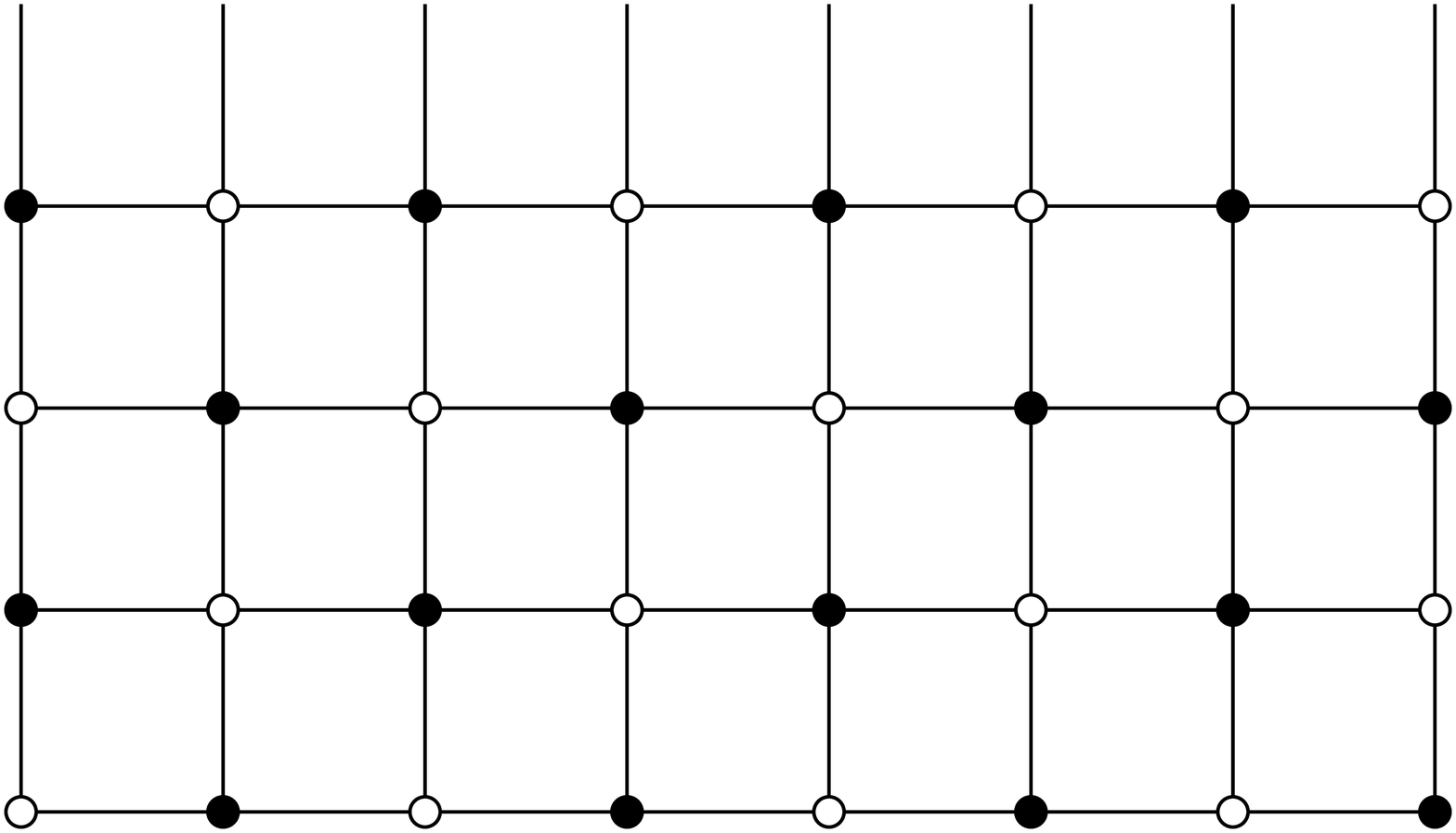}}
\hfill
{\includegraphics[width=0.35\textwidth]{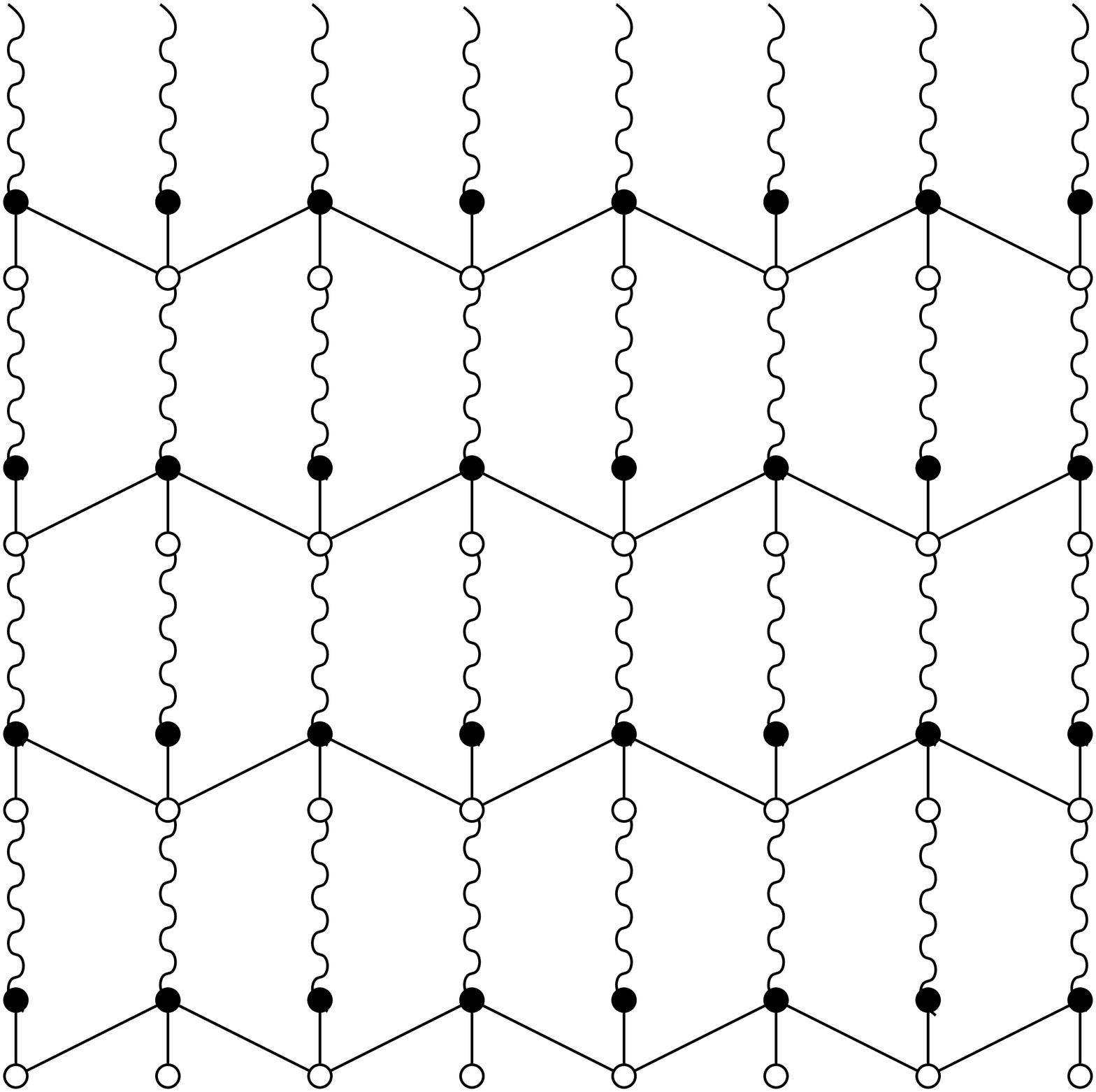}}
\hfill
}
%\vskip-0.1in
\caption{\label{fea} The square cylinder graph $C_{m,n}$ for $m=4$, $n=8$
%; vertical edges have weight $x$, horizontal edges have weight 1
(left). A fabric graph with the same number of perfect matchings
%; squiggle edges have weight $x$, all others weight~1
(right). In both pictures, edges extending up from the top vertices connect down to the corresponding bottom vertices.}
%\vskip-0.15in
\end{figure}

Define the {\it square cylinder graph} $C_{m,n}$ to be the graph obtained from the $m\times n$ rectangular grid graph with vertices $\{(i,j): i,j\in\Z, 0\leq i\leq m-1,  0\leq j\leq n-1\}$ by adding an edge connecting vertex $(i,0)$ to vertex $(i,n-1)$ for each $i=1,\dotsc,m$ (see Figure \ref{fea} for an example). The {\it girth} of the square cylinder graph $C_{m,n}$ is defined to be equal to $m$.
%Weight the vertical edges~by~$x$, and the horizontal edges by 1. Denote the sum of the weights of this weighting of $C_{m,n}$ by~$\M(C_{m,n};x)$.

The main result of this section is the following.

\begin{theo}
\label{tea}
If $m$ is even, we have
\begin{align}
\M(C_{m,n})&=
%\sqrt{
%\prod_{k=1}^n
%\left\{x+\left[2+\cos\frac{2k\pi}{n+1}+\sqrt{1+\cos^2\frac{2k\pi}{n+1}}\,\right]^{m/2}\right\}
%}
%\nonumber
%\\[10pt]
%&
%\times
%\sqrt{
%\prod_{k=1}^n
%\left\{x+\left[2+\cos\frac{2k\pi}{n+1}-\sqrt{1+\cos^2\frac{2k\pi}{n+1}}\,\right]^{m/2}\right\}
%}
\nonumber
\\[10pt]
&\!\!\!\!\!\!\!\!\!\!\!\!\!\!\!\!\!\!\!\!\!\!\!\!
\sqrt{
\prod_{k=1}^n
\left[1+\left(\!\cos\frac{k\pi}{n+1}+\sqrt{1+\cos^2\frac{k\pi}{n+1}}\,\right)^{\!\!m}\,\right]
\!\left[1+\left(\!\cos\frac{k\pi}{n+1}-\sqrt{1+\cos^2\frac{k\pi}{n+1}}\,\right)^{\!\!m}\,
\right]
}
\nonumber
\\[10pt]
&\!\!\!\!\!\!\!\!\!\!\!\!\!\!\!\!\!\!\!\!\!\!\!\!
=2^{n-2\lfloor n/2\rfloor}
\prod_{k=1}^{\lfloor n/2\rfloor}
\left[1+\left(\!\cos\frac{k\pi}{n+1}+\sqrt{1+\cos^2\frac{k\pi}{n+1}}\,\right)^{\!\!m}\,\right]
\!\left[1+\left(\!\cos\frac{k\pi}{n+1}-\sqrt{1+\cos^2\frac{k\pi}{n+1}}\,\right)^{\!\!m}\
\right].
\label{eea}
\end{align}

\end{theo}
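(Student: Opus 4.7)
The plan is to apply Theorem \ref{tba}(b) to a cylindrical fabric graph $G$ having the same number of perfect matchings as $C_{m,n}$, as indicated by the construction in Figure \ref{fea}(right). Following the pattern of the honeycomb proof in Section 3, I expect $G$ to have $m$ strands whose bi-adjacency matrices alternate---because $m$ is even---as $A,A^T,A,A^T,\dotsc$, where $A$ is the bi-adjacency matrix of an odd-indexed strand. Setting $x_1=\dotsb=x_m=1$ in \eqref{ebb} then gives
\begin{equation*}
\M(C_{m,n})=\det\bigl(I+(AA^T)^{m/2}\bigr),
\end{equation*}
reducing the problem to diagonalizing the $n\times n$ matrix $AA^T$.

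The next step is to identify the eigenvalues of $AA^T$. The key observation is that the numbers
\[
\mu^+_k:=\cos\tfrac{k\pi}{n+1}+\sqrt{1+\cos^2\tfrac{k\pi}{n+1}},\qquad
\mu^-_k:=\cos\tfrac{k\pi}{n+1}-\sqrt{1+\cos^2\tfrac{k\pi}{n+1}}
\]
appearing in \eqref{eea} are precisely the roots of the quadratic $\mu^2-\lambda_k\mu-1=0$, where $\lambda_k:=2\cos\frac{k\pi}{n+1}$; equivalently, they satisfy $\mu-\mu^{-1}=\lambda_k$. But $\lambda_k$ is exactly the $k$th eigenvalue of the tridiagonal matrix from Lemma \ref{tcb}(a). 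Accordingly, the plan is to show that each eigenvector of that tridiagonal matrix lifts---through the path structure of the strand---to an eigenvector of $AA^T$ with eigenvalue $(\mu^+_k)^2$. Since these $n$ values are all distinct, $AA^T$ is diagonalizable, and $(AA^T)^{m/2}$ has eigenvalues $(\mu^+_k)^m$ for $k=1,\dotsc,n$.

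Substituting into the determinant formula yields
\[
\M(C_{m,n})=\prod_{k=1}^{n}\bigl(1+(\mu^+_k)^m\bigr).
\]
To recover the symmetric form in \eqref{eea}, I use the reflection $\cos\frac{(n+1-k)\pi}{n+1}=-\cos\frac{k\pi}{n+1}$, which gives $\mu^+_{n+1-k}=-\mu^-_k$; since $m$ is even this implies $(\mu^+_{n+1-k})^m=(\mu^-_k)^m$, so reindexing yields $\prod_k(1+(\mu^+_k)^m)=\prod_k(1+(\mu^-_k)^m)$, and squaring gives the $2n$-factor product under the radical in \eqref{eea}. The second form follows by folding this product under the involution $k\leftrightarrow n+1-k$: the pair of factors for $k$ and for $n+1-k$ coincide for $k\neq(n+1)/2$, while for $n$ odd the fixed index $k=(n+1)/2$ yields $\cos\frac{k\pi}{n+1}=0$ and $\mu^+_k=1$, $\mu^-_k=-1$, contributing a pair of factors each equal to $2$ whose square root $2$ becomes the prefactor $2^{n-2\lfloor n/2\rfloor}$.

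The hard part will be the first step: explicitly constructing the fabric graph in Figure \ref{fea}(right) and verifying that it has the same number of perfect matchings as $C_{m,n}$, together with writing down the strand bi-adjacency matrix $A$. Once $A$ is in hand, showing that the eigenvalues of $AA^T$ are the squared quantities $(\mu^+_k)^2$ should be a routine extension of the argument used for Lemma \ref{tcb}(b), relying on the quadratic substitution $\nu=\mu^2$, $\mu-\mu^{-1}=\lambda$; the subsequent rearrangements are elementary trigonometric identities.
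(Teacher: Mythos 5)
Your overall architecture coincides with the paper's: build a fabric graph with the same matching count as $C_{m,n}$ via vertex splitting, apply Theorem \ref{tba}(b) with all $x_i=1$ to get $\M(C_{m,n})=\det\bigl(I_n+(A_nA_n^T)^{m/2}\bigr)$ (this is exactly \eqref{eev}), identify the spectrum of $A_nA_n^T$, and finish with \eqref{ecm}. Your description of that spectrum as $\{(\mu_k^+)^2\}_{k=1}^n$ is the same multiset as the two-sign list over $k\le\lceil n/2\rceil$ in Lemma \ref{teb} (since $\mu^+_{n+1-k}=-\mu^-_k$), and your trigonometric folding to obtain both forms of \eqref{eea} is correct.

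The gap is in the step you dismiss as routine: determining the eigenvalues of $A_nA_n^T$. The mechanism you propose --- that each eigenvector of the tridiagonal path matrix of Lemma \ref{tcb}(a) ``lifts'' to an eigenvector of $A_nA_n^T$ with eigenvalue $(\mu_k^+)^2$ --- cannot work as stated, because the two matrices do not commute. Already for $n=3$ one has $A_3A_3^T=\left(\begin{smallmatrix}2&1&1\\1&1&1\\1&1&2\end{smallmatrix}\right)$, which fails to commute with the path matrix; if every eigenvector of the path matrix were an eigenvector of $A_nA_n^T$, the two would be simultaneously diagonalizable and hence would commute. So the spectral correspondence $\nu=\mu^2$ with $\mu-\mu^{-1}=2\cos\frac{k\pi}{n+1}$ is true only at the level of characteristic polynomials, not of eigenvectors, and establishing it is the real content of the paper's Lemma \ref{teb}: coupled recurrences for the relevant determinants, solution of the resulting second-order recurrence with roots $\theta_{1,2}$, and the identity $\theta_1\theta_2=\lambda^2$ to reduce the root condition to $\theta_1^{2n+1}=\lambda^{2n+1}$ --- a substantially longer computation than the Chebyshev argument of Lemma \ref{tcb}(b) that you cite as a template. (If you want a route closer in spirit to your idea, note that $\det(\mu^2I_n-A_nA_n^T)$ equals the characteristic polynomial of the caterpillar strand graph, which, being a tree, equals its matching polynomial; one could then try to relate that to the matching polynomial of a path. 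But that argument is not supplied either.) As written, the proposal asserts the key spectral fact without a valid proof, and the one concrete mechanism offered for it is false.
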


\parindent0pt
Our proof of Theorem \ref{tea} employs the following result.

\parindent12pt

\begin{lem}
\label{teb}  
Let $A=(a_{ij})_{i,j\geq1}$ be the infinite matrix having support
\begin{equation}
A=\left[\begin{matrix}
1 & 1 &  &  &  &  & & &  & \\
 & 1 &  &  &  &  & & &  & \\
 & 1 & 1 & 1 &  &  & & &  & \\
 &  &  & 1 &  &  & & &  & \\
 &  &  & 1 & 1 & 1 & & &  & \\
 &  &  &  &  & 1 & & &  & \\
 &  &  &  &  & 1 &1 & &  & \\
 &  &  &  & &  &  &\cdot &  & \\
 &  &  &  & &  &  & &\cdot  & \\
 &  &  &  & &  &  & &  & \cdot\\
\end{matrix}\right],
\label{eeb}
\end{equation}
and define $A_n$ to be its restriction to the first $n$ rows and first $n$ columns. Then the eigenvalues of $A_nA_n^T$ are
\begin{align}
&\left(\cos\frac{k\pi}{n+1}+\sqrt{1+\cos^2\frac{k\pi}{n+1}}\right)^2,\ \ \ k=1,2,\dotsc,\lceil n/2\rceil
\nonumber
\\[10pt]
&\left(\cos\frac{k\pi}{n+1}-\sqrt{1+\cos^2\frac{k\pi}{n+1}}\right)^2,\ \ \ k=1,2,\dotsc,\lceil n/2\rceil,
\label{eec}
\end{align}
with the convention that for $n$ odd and $k=\lceil n/2\rceil$, the two expressions above --- both of which equal $1$ --- supply the eigenvalue $1$ with a total multiplicity of one.

\end{lem}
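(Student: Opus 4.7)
The plan is to realize $A_n$ as the bi-adjacency matrix of a concrete bipartite graph---a caterpillar---and thereby reduce the spectrum of $A_nA_n^T$ to the already known spectrum of the path graph from Lemma~\ref{tcb}(a). Inspecting the support displayed in \eqref{eeb}, one sees that the bipartite graph $G_n$ with white vertices $u_1,\dotsc,u_n$ (indexing the rows of $A_n$) and black vertices $w_1,\dotsc,w_n$ (indexing the columns), joined when the corresponding entry is $1$, is a caterpillar: the odd-indexed $u$'s and the even-indexed $w$'s alternate along a path $u_1 - w_2 - u_3 - w_4 - \cdots$ of $n$ vertices, which serves as a spine; each of the remaining vertices $w_1, u_2, w_3, u_4, \dotsc$ is a pendant leaf attached to a unique spine vertex. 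Relabeling so that the $n$ spine vertices $s_1,\dotsc,s_n$ come first (with $s_i$ the $i$th along the spine) and their leaves $\ell_1,\dotsc,\ell_n$ come second, the $2n\times 2n$ adjacency matrix of $G_n$---which, up to a simultaneous row-and-column permutation, equals $\left(\begin{smallmatrix}0 & A_n\\ A_n^T & 0\end{smallmatrix}\right)$ and hence shares its spectrum---takes the block form $N=\left(\begin{smallmatrix}P_n & I_n\\ I_n & 0\end{smallmatrix}\right)$, where $P_n$ is the path adjacency matrix from Lemma~\ref{tcb}(a).

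Next I would solve the eigenvalue equation for $N$. Writing a prospective eigenvector as $\binom{x}{y}\in\mathbb{R}^n\oplus\mathbb{R}^n$, the relation $N\binom{x}{y}=\lambda\binom{x}{y}$ unpacks to $P_n x+y=\lambda x$ and $x=\lambda y$. The value $\lambda=0$ is ruled out (it forces $x=y=0$), so substituting $y=x/\lambda$ yields $P_n x=(\lambda-1/\lambda)x$. Hence $\lambda$ is an eigenvalue of $N$ precisely when $\lambda-1/\lambda$ is an eigenvalue of $P_n$, which by Lemma~\ref{tcb}(a) amounts to $\lambda^2-2\cos\theta_k\,\lambda-1=0$ for some $k\in\{1,\dotsc,n\}$ with $\theta_k:=k\pi/(n+1)$. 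Solving, the $2n$ eigenvalues of $N$ are $\lambda=\cos\theta_k\pm\sqrt{1+\cos^2\theta_k}$, for $k=1,\dotsc,n$.

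Since $G_n$ is bipartite, the spectrum of $N$ is symmetric about $0$, and the eigenvalues of $A_nA_n^T$ are precisely the squares of the $n$ positive eigenvalues of $N$. Because $\sqrt{1+\cos^2\theta_k}>|\cos\theta_k|$, only the ``$+$'' root $\lambda_+^{(k)}:=\cos\theta_k+\sqrt{1+\cos^2\theta_k}$ is positive, giving the $n$ eigenvalues $(\lambda_+^{(k)})^2$ of $A_nA_n^T$ for $k=1,\dotsc,n$. To recast this list in the symmetric form \eqref{eec}, I would use the identity $\cos\theta_{n+1-k}=-\cos\theta_k$, which yields $(\lambda_+^{(n+1-k)})^2=(\lambda_-^{(k)})^2$ with $\lambda_-^{(k)}:=\cos\theta_k-\sqrt{1+\cos^2\theta_k}$. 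Reindexing the entries with $k>\lceil n/2\rceil$ accordingly splits the list into a ``$+$'' family over $k=1,\dotsc,\lceil n/2\rceil$ and a ``$-$'' family over $k=1,\dotsc,\lfloor n/2\rfloor$; when $n$ is odd the middle index $k=(n+1)/2$ has $\cos\theta_k=0$, so $\lambda_+^{(k)}=1$ and $\lambda_-^{(k)}=-1$ both square to $1$ and together contribute only one copy of the eigenvalue $1$, matching the multiplicity convention in the statement.

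The main obstacle will be spotting the caterpillar structure of $G_n$ inside the somewhat irregular support pattern displayed in \eqref{eeb}; once that combinatorial identification is made, the reduction to $P_n$ via the block form of $N$ is routine, and the only remaining task is the bookkeeping that recasts the list of positive eigenvalues in the symmetric ``$\pm$'' format of \eqref{eec}.
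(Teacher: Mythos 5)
Your proposal is correct, and it takes a genuinely different route from the paper's. The paper attacks the characteristic polynomial of $A_nA_n^T$ head-on: it records the period-two banded pattern of that matrix, expands along first rows and columns to get coupled three-term recurrences for auxiliary polynomials $p_n,q_n,r_n$, solves the recurrence in closed form (with coefficients involving $\sqrt{\la^2-6\la+1}$), and locates the zeros by reducing to $\theta_1^{2n+1}=\la^{2n+1}$, i.e.\ to roots of unity --- along the way it must discard the spurious values $3\pm2\sqrt{2}$ coming from $k=0$ and treat the even- and odd-order truncations separately. You instead observe that $A_n$ is the bi-adjacency matrix of the comb graph (a path on $n$ vertices with one pendant vertex hung on each spine vertex), so that $\left(\begin{smallmatrix}0&A_n\\A_n^T&0\end{smallmatrix}\right)$ is permutation-similar to $N=\left(\begin{smallmatrix}P_n&I_n\\I_n&0\end{smallmatrix}\right)$; the substitution $y=x/\lambda$ reduces the spectrum of $N$ to that of the path via $\mu=\lambda-1/\lambda$, and the eigenvalues of $A_nA_n^T$ are the squares of the $n$ positive eigenvalues of $N$. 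This reuses exactly the ingredient the paper already has on hand (Lemma \ref{tcb}(a)), is uniform in the parity of $n$, produces $2n$ manifestly distinct eigenvalues of $N$ with no spurious roots, and makes the multiplicity-one convention at $\cos\frac{k\pi}{n+1}=0$ transparent. In a full write-up you should spell out two routine points: the verification that the support \eqref{eeb} really is the comb's bi-adjacency matrix in your ordering (checking in particular the truncation at the right end for both parities of $n$, and that leaves attach only to their own spine vertex), and the standard fact that for square $A_n$ the spectrum of $A_nA_n^T$ consists of the squares of the singular values of $A_n$, i.e.\ of the nonnegative eigenvalues of $N$ (e.g.\ via $N^2=\left(\begin{smallmatrix}A_nA_n^T&0\\0&A_n^TA_n\end{smallmatrix}\right)$ together with the symmetry of the spectrum of $N$).
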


\begin{proof} The matrix $A_nA_n^T$ has order $n$, and the pattern of its non-zero entries looks slightly different depending on whether the index $n$ is even or odd. For even indices we have
\setcounter{MaxMatrixCols}{20}
\begin{equation}
A_{2n}A_{2n}^T=\begin{bmatrix}
2&1&1& & & & & & & & & \\
1&1&1& & & & & & & & & \\
1&1&3&1&1& & & & & & & \\
 & &1&1&1& & & & & & & \\
 & &1&1&3& & & & & & & \\
 & & & & &\cdot& & & & & & \\
 & & & & & &\cdot& & & & & \\
 & & & & & & &\cdot& & & & \\
 & & & & & & & &3&1&1& \\
 & & & & & & & &1&1&1& \\
 & & & & & & & &1&1&3&1\\
 & & & & & & & & & &1&1\\
\end{bmatrix},
\label{eed}
\end{equation}
(where only the pattern followed by the non-zero entries is indicated). For odd indices the truncation at the bottom right corner is like in the matrix obtained from the one above by deleting the last row and column.

However, the proof we present works for both even and odd indices.
%we don't have to distinguish between even and odd $n$ until the very end, when we need to decide which of the zeros we obtain are the actual eigenvalues.
The reason is that all the recurrences we work with are obtained by expanding the resulting determinants along their first rows or columns, and the pattern of the non-zero entries of $A_nA_n^T$ is the same around the top left corner for both $n$ even and $n$ odd. We give here the details for the even case.

Denote by $q_n(\lambda)$ the characteristic polynomial of $A_{2n}A_{2n}^T$:
\begin{equation}
q_n(\lambda)=\det\begin{bmatrix}
\lambda-2&-1&-1& & & & & & & & & \\
-1&\lambda-1&-1& & & & & & & & & \\
-1&-1&\lambda-3&-1&-1& & & & & & & \\
 & &-1&\lambda-1&-1& & & & & & & \\
 & &-1&-1&\lambda-3& & & & & & & \\
 & & & & &\cdot& & & & & & \\
 & & & & & &\cdot& & & & & \\
 & & & & & & &\cdot& & & & \\
 & & & & & & & &\lambda-3&-1&-1& \\
 & & & & & & & &-1&\lambda-1&-1& \\
 & & & & & & & &-1&-1&\lambda-3&-1\\
 & & & & & & & & & &-1&\lambda-1\\
\end{bmatrix}.
\label{eee}
\end{equation}
Regarding the first column as $(\lambda-3),-1,-1,0,\dotsc)^T+(1,0,0,\dotsc,0)^T$ and using the linearity of the determinant in columns, we obtain
\begin{equation}
q_n(\lambda)=p_n(\lambda)+r_n(\lambda),
\label{eef}
\end{equation}
with
\begin{equation}
p_n(\lambda)=\det\begin{bmatrix}
\lambda-3&-1&-1& & & & & & & & & \\
-1&\lambda-1&-1& & & & & & & & & \\
-1&-1&\lambda-3&-1&-1& & & & & & & \\
 & &-1&\lambda-1&-1& & & & & & & \\
 & &-1&-1&\lambda-3& & & & & & & \\
 & & & & &\cdot& & & & & & \\
 & & & & & &\cdot& & & & & \\
 & & & & & & &\cdot& & & & \\
 & & & & & & & &\lambda-3&-1&-1& \\
 & & & & & & & &-1&\lambda-1&-1& \\
 & & & & & & & &-1&-1&\lambda-3&-1\\
 & & & & & & & & & &-1&\lambda-1\\
\end{bmatrix}
\label{eeg}
\end{equation}
(where the matrix has $2n$ rows) and
\begin{equation}
r_n(\lambda)=\det\begin{bmatrix}
\lambda-1&-1& & & & & & & & & \\
-1&\lambda-3&-1&-1& & & & & & & \\
 &-1&\lambda-1&-1& & & & & & & \\
 &-1&-1&\lambda-3& & & & & & & \\
 & & & &\cdot& & & & & & \\
 & & & & &\cdot& & & & & \\
 & & & & & &\cdot& & & & \\
 & & & & & & &\lambda-3&-1&-1& \\
 & & & & & & &-1&\lambda-1&-1& \\
 & & & & & & &-1&-1&\lambda-3&-1\\
 & & & & & & & & &-1&\lambda-1\\
\end{bmatrix}
\label{eeh}
\end{equation}
(where the matrix has $2n-1$ rows). Expanding the determinant on the right hand side of \eqref{eeg} along the first column yields three non-zero terms. The first is clearly $(\lambda-3)r_n(\lambda)$. The second is readily seen (by expanding along its first column) to equal $-p_{n-1}(\lambda)-r_{n-1}(\lambda)$. Similarly, expanding along the first column in the third term, one sees that it is equal to $-\lambda r_{n-1}(\lambda)$. This gives
\begin{equation}
p_n(\lambda)=(\lambda-3)r_{n}(\lambda)-p_{n-1}(\lambda)-(\lambda+1)r_{n-1}(\lambda).
\label{eei}
\end{equation}
On the other hand, expanding along the first column in the determinant for $r_n(\la)$, we get
\begin{equation}
r_n(\la)=(\la-1)p_{n-1}(\la)-r_{n-1}(\la).
\label{eej}
\end{equation}
The latter gives
\begin{equation}
p_{n-1}=\frac{1}{\la-1}(r_n+r_{n-1}),
\label{eejj}
\end{equation}
which when substituted into \eqref{eei} yields
\begin{equation}
r_{n+1}-(\la^2-4\la+1)r_{n}+\la^2r_{n-1}=0.
\label{eek}
\end{equation}
This recurrence holds for $n\geq1$ if we define $r_0=0$. The solutions of the characteristic equation of recurrence \eqref{eek} are
\begin{equation}
\theta_{1,2}=\frac{\la^2-4\la+1\pm(\la-1)\sqrt{\la^2-6\la+1}}{2}.
\label{eel}
\end{equation}
Therefore, $r_n$ can be expressed as
\begin{equation}
r_{n}=c_1\theta_1^n+c_2\theta_2^n,\ \ \ n\geq0,
\label{eem}
\end{equation}
where the coefficients $c_1$ and $c_2$ can be determined from the initial conditions $r_0=0$, $r_1=\la-1$.

Substituting the expression \eqref{eem} for $r_n$ into \eqref{eejj}, and then using the resulting expression for $p_n$ and the expression \eqref{eem} for $r_n$ in equation \eqref{eef}, we obtain
\begin{equation}
(\la-1)q_{n}(\la)=c_1(\la+\theta_1)\theta_1^n+c_2(\la+\theta_2)\theta_2^n.
\label{een}
\end{equation}
Using the initial conditions $r_0=0$, $r_1=\la-1$, one readily gets from \eqref{eem} that
\begin{equation}
c_1=\frac{1}{\sqrt{\la^2-6\la+1}}, \ \ \ c_2=-\frac{1}{\sqrt{\la^2-6\la+1}}.
\label{eeo}
\end{equation}
Our goal is to find the eigenvalues of $A_{2n}A_{2n}^T$, which are the zeros of $q_{n}(\la)$. By \eqref{een} and \eqref{eeo}, the zeros of $(\la-1)q_{n}(\la)$ are those values of $\la$ for which
\begin{equation}
\frac{1}{\sqrt{\la^2-6\la+1}}\left[(\la+\theta_1)\theta_1^n-(\la+\theta_2)\theta_2^n\right]=0.
\label{eep}
\end{equation}
Using \eqref{eel}, one sees after some manipulation that $(\la+\theta_1)\theta_1^n=(\la+\theta_2)\theta_2^n$ if and only if
\begin{equation}
(\la-1)(\theta_1^{n+1}-\la\theta_1^{n})=0.
\label{eepp}
\end{equation}
Since $\theta_1\theta_2=\la^2$, if $\la\neq1$ this amounts to $\theta_1^{2n+1}=\la^{2n+1}$, which in turn means that
\begin{equation}
\theta_1=\varepsilon\la, \ \ \ \varepsilon=e^{\frac{2k\pi i}{2n+1}},\ \ \ k=0,1,\dotsc,2n.
\label{eeq}
\end{equation}
Expressing $\theta_1$ by formula \eqref{eel}, \eqref{eeq} reduces to a quadratic equation in lambda, with solutions
\begin{equation}
\frac{1}{2\varepsilon}\left(1+4\varepsilon+\varepsilon^2\pm\sqrt{1+8\ve+14\ve^2+8\ve^3+\ve^4}\right).
\label{eer}
\end{equation}
Using $\ve+\ve^{-1}=2\cos\frac{2k\pi}{2n+1}$ and then the formula $\cos 2\alpha=2\cos^2\alpha-1$, the expression \eqref{eer} can be transformed into
\begin{equation}
\left(\cos\frac{k\pi}{2n+1}\pm\sqrt{1+\cos^2\frac{k\pi}{2n+1}}\right)^2.
\label{ees}
\end{equation}
By the paragraph before \eqref{eep}, we obtain that the zeros of $q_n(\la)$ are among
\begin{align}
%&
\left(\cos\frac{k\pi}{2n+1}\pm\sqrt{1+\cos^2\frac{k\pi}{2n+1}}\right)^2,\ \ \ k=0,1,\dotsc, 2n.
%\nonumber
%\\[10pt]
%&\left(\cos\frac{k\pi}{2n+1}-\sqrt{1+\cos^2\frac{k\pi}{2n+1}}\right)^2,\ \ \ k=0,1,\dotsc, 2n.
\label{eet}
\end{align}
Since $\cos(\pi-x)=-\cos x$, all the distinct values provided by \eqref{eet} are obtained if $k$ runs over $0,1,\dotsc,n$. Furthermore, for $k=0$, these become $3\pm2\sqrt{2}$, which are the values that make the denominator in \eqref{eep} equal to zero. It is not hard to see that the remaining $2n$ values (obtained by taking $k=1,2,\dotsc,n$ in \eqref{eet}) are all distinct. Therefore they are the eigenvalues of $q_n(\lambda)$, as claimed by the statement of the lemma.

For the case of odd index, the above arguments applied to the matrix $A_{2n-1}A_{2n-1}^T$ lead to the conclusion that the zeros of its characteristic polynomial multiplied by $\la-1$ form the set
\begin{align}
%&
\left(\cos\frac{k\pi}{2n}\pm\sqrt{1+\cos^2\frac{k\pi}{2n}}\right)^2,\ \ \ k=1,\dotsc,n.
\label{eeu}
\end{align}
These are $2n-1$ distinct values, the only repetition being for $k=n$, when both choices of the sign lead to the value 1. Thus the eigenvalues of $A_{2n-1}A_{2n-1}^T$ are indeed the ones described in the statement of the lemma. \end{proof}

{\it Proof of Theorem $\ref{tea}$.} Since $m$ is even, using the vertex splitting lemma of \cite{FT} (see Lemma~1.3 there) one can readily construct a fabric graph $G_{m,n}$ with the same number of perfect matchings as the square cylinder graph $C_{m,n}$ --- the fabric graph corresponding to the square cylinder on the left in Figure \ref{fea} is shown on the right in the same figure\footnote{ The indicated construction does not work if $m$ is odd, as there is a periodicity involving every two consecutive strands in the picture on the right in Figure \ref{fea}.}. The bi-adjacency matrix of odd-indexed strands in $G_{m,n}$ is the matrix $A_n$ from Lemma \ref{teb}, and the bi-adjacency matrix of even-indexed strands in $G_{m,n}$ is its transpose $A_n^T$. Therefore, by Theorem \ref{tba}(b) we have
\begin{equation}
\M(C_{m,n})=\M(G_{m,n})=\det\left(I_n+(A_nA_n^T)^{m/2}\right).
\label{eev}
\end{equation}
Then formula \eqref{eea} follows from \eqref{ecm} and Lemma \ref{teb}. \hfill $\square$

%We note that, tempting as it may be to conjecture that more generally $\M(C_{m,n};x)$ is given by the modification of the expression on the right hand side of \eqref{eeb} obtained by replacing the initial~1's in the square brackets by $x$'s, this is actually not true. Indeed, for $m=3$, $n=2$ and $x=2$, the resulting expression is not even an integer.

\parindent0pt
\medskip
{\it Remark $6$.} Note that despite the great similarity between our square cylinder graphs and the rectangular grid graphs, our formula from Theorem \ref{tea} has a quite different form compared to the TFK formula (see \eqref{eff}). A partial connection between the two is established in Section 7.

\parindent12pt

%\bigskip
%Add the appropriate remarks here:
%
%1. Different kind of formula compared to Kasteleyn --- product over m, not mn factors!
%
%2. Connection between the two? See next section!
%
%3. The other remarks from 3./p.2

%\newpage

\section{A factorization theorem for perfect matchings of symmetric turn-bipartite graphs}

In this section we give an extension of our factorization theorem of \cite{FT} that applies to certain planar non-bipartite graphs, which we call turn-bipartite.

For a planar graph $G$ embedded in a region $R$ with holes, we say that $G$ is {\it turn-bipartite} if
%$G$ is not bipartite and
the length of any cycle $C$ of $G$ has the same parity as the number of holes of $R$ that are in the interior of $C$. The picture on the left in Figure \ref{ffa} shows a turn-bipartite graph embedded in an annulus.
%If $G$ is connected, this is equivalent to requiring every cycle that goes (or turns) around precisely one hole to be odd, and every cycle that has no hole inside it to be even.

Following the terminology introduced in \cite{FT}, we say that a plane graph $G$ is {\it symmetric} if it is invariant under the reflection across some straight line.
%Figure1.1showsanexampleof a symmetricgraph.
Clearly, a symmetric graph has no perfect matching unless the axis of symmetry contains an even number of vertices (otherwise, the total number of vertices is odd); we will assume this throughout this section.

A {\it weighted} symmetric graph is a symmetric graph with a weight function on the edges that is constant on the orbits of the reflection. The {\it width} of a symmetric graph $G$, denoted $\w(G)$, is defined to be half the number  of vertices of $G$ lying on the symmetry axis.

Let $G$ be a weighted symmetric graph with symmetry axis $\ell$, which we consider to be horizontal. Let $a_1, b_1, a_2,b_2,\dotsc,a_{\w(G)},b_{\w(G)}$ be the vertices lying on $\ell$, as they occur from left to right.
A {\it reduced} subgraph of $G$ is a graph obtained from $G$ by deleting at each vertex $a_i$ either all  incident edges above $\ell$ (we refer to this operation for short as ``cutting above $a_i$") or all incident edges below $\ell$ (``cutting below $a_i$," for short). We recall the following result proved in~\cite{FT} (see Lemma 1.1 there).

%Figure1.2showsareducedsubgraphof thegraphpresentedin Figure1.1(thedeletededgesof theoriginalgrapharerepresentedby dottedlines).

%Theweight of a matchingis de nedto be theproductof theweights of theedgescontainedin. Thematchinggeneratingfunctionof a weightedgraphG, denotedM(G),is thesumof theweights of allmatchingsofG.  Thematchinggeneratingfunctionisclearlymultiplicative withrespectto disjoint unionsof graphs.We willhenceforthassumethatallgraphsunderconsiderationareconnected.Lemma1.1.All2w(G)reduced subgraphsof a weighted symmetricgraphGhavethe samematchinggeneratingfunction.

\begin{lem}[\cite{TF}]
\label{tga}  
All $2^{\w(G)}$ reduced subgraphs of a weighted symmetric graph $G$ have the same weighted count of perfect matchings.
\end{lem}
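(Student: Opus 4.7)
My plan is to prove the lemma by induction on $\w(G)$, reducing to the case where the two cut-choice functions $f, f'$ differ at exactly one axis index $i$; this single-flip invariance then generates all identities $\M(G_f) = \M(G_{f'})$ via composition. The base case $\w(G) = 0$ is trivial.

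For the single-flip case, I would construct a weight-preserving bijection $\Phi$ between the set of perfect matchings of $G_f$ and that of $G_{f'}$. The starting observation is that the reflection $\sigma$ across $\ell$ is a weight-preserving graph automorphism of $G$ sending $G_f$ to $G_{f^c}$ (the reduced subgraph with all cuts simultaneously flipped), and hence yields the global identity $\M(G_f) = \M(G_{f^c})$. To promote this global symmetry to a local single-axis flip at $a_i$, I would analyze the overlay $\mu \cup \sigma(\mu)$ for each perfect matching $\mu$ of $G_f$, which decomposes $\sigma$-invariantly into doubled edges and alternating cycles. If the $\mu$-edge at $a_i$ is horizontal (lies on the axis), then $\mu$ is already a perfect matching of $G_{f'}$, and I set $\Phi(\mu) = \mu$. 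Otherwise, $a_i$ lies on a $\sigma$-invariant alternating cycle $C$ in the overlay, and $\Phi(\mu)$ is defined by swapping $\mu$-edges for $\sigma(\mu)$-edges along an appropriately chosen $\sigma$-invariant arc of $C$, so as to flip the side of $a_i$ while leaving the rest of $\mu$ untouched.

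The main obstacle is that a $\sigma$-invariant alternating cycle through $a_i$ must contain exactly one additional axis-vertex fixed point of $\sigma$ (since an involution acting as a reflection on an even cycle fixes either $0$ or $2$ vertices); call it $y$. If $y$ is a $b$-type axis vertex, there is no cut constraint at $y$ and the swap along $C$ immediately yields a perfect matching of $G_{f'}$, giving the bijection. The trickier case is $y = a_k$ for some $k \neq i$, since the swap would then also flip the side at $a_k$, producing a matching of $G_{f \triangle \{i,k\}}$ rather than of $G_{f'}$. To resolve this, one organizes the bijection globally on the disjoint union of matching sets across all $f$, partitions matchings into orbits under cycle-swap operations, and invokes the global reflection identity $\M(G_f) = \M(G_{f^c})$ to close the resulting system of identities and force $\M(G_f) = \M(G_{f'})$; this is the approach worked out in \cite{FT}. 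Weight preservation throughout is automatic from the $\sigma$-invariance of the edge weights.
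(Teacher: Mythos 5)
First, a point of reference: the paper does not actually prove this lemma --- it quotes it from \cite{FT} (Lemma 1.1 there) --- so I am comparing your argument with the proof that reference supplies.

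Your reduction to a single flip, the use of the overlay $\mu\cup\sigma(\mu)$, and the identification of the $\sigma$-invariant alternating cycle $C$ through $a_i$ carrying exactly one further $\sigma$-fixed vertex $y$ on $\ell$ are all correct. The genuine gap is in how you dispose of the case $y=a_l$. The resolution you sketch --- organize the cycle-swap maps on $\bigsqcup_f\mathcal{M}(G_f)$ and combine them with the reflection identity $\M(G_f)=\M(G_{f^c})$ to ``close the system'' --- is not a proof, and in general the system does not close: the swap at $a_i$ sends a matching of $G_f$ to one of $G_{f\triangle\{i,l\}}$ with $l$ depending on the matching, so if, say, $\w(G)=2$ and every matching happened to pair $a_1$ with $a_2$ on a common cycle, the only relations produced would be $\M(G_f)=\M(G_{f\triangle\{1,2\}})$, which for $k=2$ is the reflection identity itself, and nothing whatsoever would relate $\M(G_f)$ to $\M(G_{f\triangle\{1\}})$. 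Deferring this step to \cite{FT} is moreover circular, since the statement being proved \emph{is} Lemma 1.1 of \cite{FT}. What actually saves the argument is that the case $y=a_l$ cannot occur, and this is exactly where planarity --- which your proof never uses --- enters. Each $\sigma$-invariant overlay cycle is a Jordan curve meeting $\ell$ only at its two fixed vertices (an edge crossing $\ell$ away from a vertex would cross its own mirror image), so the open segment of $\ell$ between those two vertices lies in its interior. Hence the axis vertices strictly between the two fixed points of $C$ are paired among themselves, by other invariant cycles or by horizontal doubled edges, and so there is an even number of them. Since the axis vertices alternate $a_1,b_1,a_2,b_2,\dotsc$ from left to right, the partner of $a_i$ in this non-crossing pairing occupies a position of opposite parity; that is, $y$ is always a $b$-vertex. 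With that in hand, the whole-cycle swap (and it must be the whole cycle: exchanging $\mu$-edges for $\sigma(\mu)$-edges along a proper arc of $C$ does not yield a perfect matching, so ``an appropriately chosen $\sigma$-invariant arc'' should be dropped) is a weight-preserving involution between the matchings of $G_f$ and those of $G_{f'}$, and the lemma follows.
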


Let $G$ be a turn-bipartite weighted symmetric graph with symmetry axis $\ell$. Assume that all the holes in the region $R$ in which $G$ is embedded are along $\ell$. Then the subgraph $G_{\geq}$ induced by the vertices of $G$ on or above $\ell$ is bipartite. 

Let us color the vertices in the two bipartition classes of $G_{\geq}$ black and white.
For definiteness, choose $a_1$ to be white.
We define a subgraph of $G$ as follows. Perform cutting operations above all white $a_i$'s and black $b_i$'s, and below all black $a_i$'s and white $b_i$'s. Note that this procedure yields cuts of the same kind at the endpoints of each edge lying on $\ell$.  Reduce the weight of each such edge by half; leave all other weights unchanged. Denote by $G'$ the resulting graph.
%Figure1.3illustratesthisprocedureforthegraphpicturedin Figure1.1(theedgeswhoseweight hasbeenreducedby halfaremarked by 1/2)

The main result of this section is the following.

\begin{theo}
\label{tgb}
Let $G$ be a weighted, turn-bipartite symmetric graph  embedded in a region $R$ so that all the holes of $R$ are along the symmetry axis. Then 
\begin{equation}
\M(G)=2^{\w(G)}\M(G').
\label{ega}
\end{equation}

\end{theo}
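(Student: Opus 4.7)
My plan is to adapt the proof of the original factorization theorem of \cite{FT} (whose setting there is bipartite-symmetric) to the turn-bipartite setting considered here, exploiting the fact that under the hypotheses the upper half $G_\geq$ and (by reflection) the lower half $G_\leq$ are both bipartite, even though $G$ itself need not be. The crucial combinatorial device is to stratify perfect matchings of $G$ by their \emph{axis signature}: a labeling of the axis vertices by $U$, $D$ or $H$ recording whether, in a given matching, each axis vertex is matched upward, downward, or along an on-axis horizontal edge (with $H$-labeled vertices paired by the on-axis edges used).

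For each axis signature $\tau$, the contribution to $\M(G)$ factors as $\mathrm{wt}_0(\tau)\cdot M_\geq(\tau)\cdot M_\leq(\tau)$, where $\mathrm{wt}_0(\tau)$ is the product of the weights of the on-axis edges in $\tau$, and $M_\geq(\tau)$ (resp.~$M_\leq(\tau)$) is the weighted matching count of $G_\geq$ (resp.~$G_\leq$) after removing the $D$-labeled (resp.~$U$-labeled) axis vertices together with all on-axis edges. The bipartiteness of $G_\geq$ forces $M_\geq(\tau)$ to vanish unless $\tau$ satisfies a color-parity constraint inherited from the two-coloring of $G_\geq$: the excess of white over black among the $D$-labeled axis vertices must equal a fixed quantity determined by the coloring of the upper non-axis vertices. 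By reflection symmetry one has $M_\geq(\tau)=M_\leq(\bar\tau)$, where $\bar\tau$ is obtained from $\tau$ by swapping $U$ and $D$.

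The main step is then to organize the color-parity-allowed signatures into orbits of size $2^{\w(G)}$ under a group of ``local flip'' involutions (one indexed by each pair $(a_i,b_i)$), each of which swaps the $U$ and $D$ labels at $a_i$ together with a compensating swap at some $b_j$, chosen so as to preserve the color-parity constraint. On each such orbit the product $\mathrm{wt}_0(\tau)\,M_\geq(\tau)\,M_\leq(\tau)$ is constant, and the axis signatures of perfect matchings of $G'$ form a complete set of orbit representatives, singled out by the cuts: $U$ is allowed only at black $a_i$'s and white $b_i$'s, and $D$ only at white $a_i$'s and black $b_i$'s. The halving of axis edge weights in the definition of $G'$ compensates for on-axis edges, whose two endpoints are of opposite colors in $G_\geq$ and whose flips are therefore linked: each on-axis edge in a signature cuts the naive orbit size by a factor of two, and the weight $1/2$ in $G'$ reinstates the missing factor.

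The main obstacle will be making the orbit analysis precise and bookkeeping the factors of two correctly, especially around on-axis edges. A fallback strategy, if this combinatorial route proves too delicate, is to sum the identity of Lemma~\ref{tga} over all $2^{\w(G)}$ reduced subgraphs to obtain $2^{\w(G)}\M(R)=\sum_\mu w(\mu)\,2^{h(\mu)}$ with $h(\mu)$ counting the $a_i$'s matched on-axis by $\mu$, and then iterate the same identity with the roles of the $a_i$'s and $b_i$'s exchanged, using the bipartite constraint and the halving of axis weights in $G'$ to convert this into $\M(G)=2^{\w(G)}\M(G')$.
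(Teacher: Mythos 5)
Your main route hinges on a step that fails: the claim that the signature weights $\mathrm{wt}_0(\tau)\,M_\geq(\tau)\,M_\leq(\tau)$ are constant on orbits of size $2^{\w(G)}$ under local flips, with the signatures of $G'$ as orbit representatives. There is a cardinality obstruction to any such orbit structure: with $k=\w(G)$ and no on-axis edges, the signatures satisfying the color-parity (balancedness) constraint number $\binom{2k}{k}$, which is not divisible by $2^k$ already for $k=2$ ($6$ versus $4$), so the allowed signatures cannot be partitioned into orbits of size $2^{\w(G)}$ at all. More fundamentally, the factor $2^{\w(G)}$ does not come from equality of individual signature counts; it comes from equality of the $2^k$ \emph{sums} of signature counts over the cut-classes at the $a_i$'s, which is exactly Lemma~\ref{tga} (Lemma~1.1 of \cite{FT}) --- the one genuinely nontrivial input, which your main route never invokes and which the orbit claim would have to replace. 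Within a single cut-class the individual terms are far from equal: the paper shows, via the coloring analysis of Lemmas~\ref{tgc} and~\ref{tgd}, that after fixing the cuts at the $a_i$'s as in $G'$, all $2^k$ further refinements by cuts at the $b_i$'s except one yield unbalanced bipartite graphs and hence contribute zero. So the correct picture inside a class is ``one survivor, the rest vanish,'' not ``all equal.''

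Your fallback is much closer to the paper's actual argument (decompose over reduced subgraphs, apply Lemma~\ref{tga}, then use bipartiteness), but as sketched it also has a problem: after cutting at the $a_i$'s the resulting graph $G_1$ is no longer symmetric, so Lemma~\ref{tga} cannot simply be ``iterated with the roles of the $a_i$'s and $b_i$'s exchanged.'' The paper replaces that second application by the parity argument above: writing $\M(G_1)=\sum_i\M(H_i)$ over the $2^k$ cuts at the $b_i$'s, exactly one $H_i$ is balanced, namely $G'$, whence $\M(G)=2^k\M(G_1)=2^k\M(G')$. Finally, the on-axis edges and the halved weights, which you propose to handle by direct bookkeeping of ``linked flips,'' are dispatched in the paper by first proving the statement when $\{a_1,\dotsc,b_k\}$ is an independent set with exactly one edge above and one below each axis vertex, and then reducing the general case to this one via the vertex-splitting lemma of \cite{FT}; you would need some such reduction (or a substantially more careful analysis) to make that part rigorous.
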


Note that in the special case when the region $R$ is simply connected, $G$ is bipartite, and the above result becomes the factorization theorem \cite[Theorem 1.2]{FT}.

%Using the vertex splitting lemma \cite[Lemma 1.3]{FT}, we can assume without loss of generality that $\{a_1,\dotsc,b_k\}$ is an independent set, and that each vertex in it has precisely one incident edge from above and one from below, both of weight 1.

Our proof will follow from Lemma \ref{tga} and the following two additional lemmas.  A {\it doubly reduced} subgraph of $G$ is a graph obtained from $G$ by cutting either above or below each $a_i$ and $b_i$, $i=1,\dotsc,\w(G)$.

\begin{lem}
\label{tgc}
Let $G$ be a symmetric turn-bipartite graph. 

$(${\rm a}$)$. If $G$ is not bipartite, then $G$ must have a symmetric odd cycle.

$(${\rm b}$)$. All doubly reduced subgraphs of $G$ are bipartite.

%$(${\rm b}$)$. If $G$ is not bipartite, then $G$ must have a symmetric odd cycle.

\end{lem}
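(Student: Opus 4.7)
The plan for (a) is a face-counting argument on the planar subgraph $C \cup \bar C$, while the plan for (b) is a 2-coloring argument built on a structural observation about edges that cross $\ell$.

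For part (a), I would take any odd cycle $C$ of $G$. By turn-bipartiteness, $C$ encloses an odd number of holes $H$, and since each hole lies on $\ell$ and is pointwise fixed by $\sigma$, the reflected cycle $\bar C = \sigma(C)$ encloses exactly the same set $H$. I would then examine the planar arrangement $C \cup \bar C$: its bounded faces are permuted by $\sigma$, decomposing into $\sigma$-invariant singletons and paired orbits. Because each $h \in H$ lies in a unique face and is itself $\sigma$-fixed, that face must be $\sigma$-invariant; hence the $\sigma$-invariant faces collectively contain $|H|$ holes, which is odd. So some $\sigma$-invariant face $F$ contains an odd number of holes, and its boundary $\partial F$ is a $\sigma$-invariant cycle of $G$ whose length, by turn-bipartiteness applied to $\partial F$, has odd parity. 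Thus $\partial F$ is the desired symmetric odd cycle.

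For part (b), I would first establish the structural fact that in the planar embedding every edge whose interior meets $\ell$ is self-symmetric, and hence of the form $u\bar u$ for some off-axis vertex $u$: if such an edge $e$ crosses $\ell$ at an interior point $p$, then $\sigma(e)$ also passes through $p$ (since $p \in \ell$ is fixed by $\sigma$), and in a planar embedding two distinct edges cannot share an interior point. I would then 2-color $V(G)$ by the bipartition of $G_{\geq}$, extending to each $v$ strictly below $\ell$ by the color of $\bar v$ and, for a doubly reduced subgraph $G'$, flipping the color at every axis vertex that is cut above in $G'$. An edge-by-edge check shows the only monochromatic edges of $G'$ under this coloring are the crossing edges $u\bar u$ together with the on-axis edges whose two endpoints have opposite cut choices. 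Given a cycle $C'$ in $G'$, a parity analysis identifies the number of monochromatic edges of $C'$ with the number of $\ell$-transitions of $C'$ modulo $2$; since the latter is even for any closed curve, $|C'|$ is even and $G'$ has no odd cycle, so $G'$ is bipartite.

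The subtlest steps I anticipate are: in (a), verifying that $\partial F$ is a genuine simple cycle of $G$ rather than a closed walk traversing some edge of $C \cap \bar C$ twice (which can be arranged by choosing $F$ minimally among $\sigma$-invariant faces with odd hole count, or by extracting a simple subcycle of $\partial F$ still enclosing an odd set of holes); and in (b), verifying that a switching on-axis segment of $C'$ contributes an odd number of consecutive ``opposite-cut'' on-axis edges, so that the parity of monochromatic edges on $C'$ truly matches the parity of its $\ell$-transitions.
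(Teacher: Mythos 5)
Your part (a) is essentially the paper's argument: both pass to $C\cup\sigma(C)$ and extract a symmetric cycle from its face structure. The paper simply takes the boundary of the unbounded face, which automatically encloses exactly the odd set $H$, so no selection of a bounded invariant face is needed; otherwise the two are the same.

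Part (b) has a genuine error at the step you call an ``edge-by-edge check.'' Under your coloring (each vertex strictly below $\ell$ receives the color of its mirror image), every off-axis neighbor of an axis vertex $v$ --- whether above or below $\ell$ --- already carries the color opposite to the $G_{\geq}$-color of $v$: if $w$ is a below-neighbor of $v$, then $\bar w$ is an above-neighbor of $v$, has the opposite color, and $w$ inherits it. Hence flipping an axis vertex that is cut above makes \emph{all of its surviving off-axis edges} monochromatic; these are neither crossing edges $u\bar u$ nor on-axis edges, so your inventory of monochromatic edges is incomplete and the congruence with the number of $\ell$-transitions fails. In fact the flips are parity-irrelevant: a cycle uses exactly two edges at each axis vertex it visits, and a flip toggles the status of both, so on any cycle the parity of the number of monochromatic edges equals that of the unflipped coloring, whose monochromatic edges are exactly the crossing edges $u\bar u$. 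Your argument therefore reduces, after the bookkeeping is corrected, to the claim that every cycle of a doubly reduced subgraph contains an even number of crossing edges --- equivalently, that the cuts destroy every cycle that switches sides of $\ell$ at an axis vertex or along an on-axis segment. That claim is the whole content of the lemma; it is what the paper proves directly (every odd cycle must meet $\ell$ at a vertex and is interrupted there), and the coloring scaffolding does not supply it.

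The remaining claim is also exactly where on-axis edges cause trouble, and your anticipated fix does not repair it. Take an on-axis edge $a_1b_1$, a crossing edge $u\bar u$ to its right with one hole of $R$ between $b_1$ and $u\bar u$, an edge $a_1u$ above $\ell$, a path $\bar u,w,b_1$ below $\ell$, and the mirror images of these; one checks this graph is symmetric and turn-bipartite. In the doubly reduced subgraph obtained by cutting below $a_1$ and above $b_1$, the $5$-cycle $a_1,u,\bar u,w,b_1,a_1$ survives (it uses only the above-edge at $a_1$, the below-edge at $b_1$, and the on-axis edge). Your accounting would assign it two ``monochromatic'' edges ($u\bar u$ and the opposite-cut edge $a_1b_1$) and conclude it is even, whereas it has length $5$; the missing monochromatic edge is $wb_1$, of the type identified above. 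So no two-coloring can close the gap at this level of generality: part (b) really requires the vertices on $\ell$ to form an independent set with exactly one edge above and one below each --- precisely the situation to which the proof of Theorem \ref{tgb} reduces by vertex splitting, and in which the interruption argument is immediate because an odd cycle must use the unique above-edge and the unique below-edge at some axis vertex.
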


\begin{proof}
$(${\rm a}$)$. 
As $G$ is not bipartite, it has an odd cycle $C$.
Since $C$ is odd and $G$ is turn-bipartite, $C$ goes around an odd number of holes of~$R$, so in particular $C$ must cross the symmetry axis~$\ell$.
Let $C'$ be the mirror image of $C$ across $\ell$. Let $\bar{C}$ be the cycle consisting of the edges of the unbounded face of the graph $C\cup C'$. Then $\bar{C}$ is symmetric, and goes around precisely those holes of the region $R$ that $C$ goes around. Since $G$ is turn-bipartite and $C$ is odd, $\bar{C}$ is also odd.

$(${\rm b}$)$. Suppose $C$ is an odd cycle of $G$. We claim that $C$ must have at least one vertex on $\ell$. Indeed, consider the symmetric cycle $\bar{C}$ from the proof of part (a). By construction, $\bar{C}$ and $C$ have the same set of vertices on $\ell$. But if $\bar{C}$ does not have any vertex on $\ell$, then all vertices on it come in symmetric pairs, which implies that $\bar{C}$ has an even number of vertices, a contradiction.

Since the cutting operations involved in the definition of each doubly reduced subgraph $H$ guarantee that all such cycles $C$ of $G$ are interrupted, it follows that there is no odd cycle in~$H$.
\end{proof}

%\begin{lem}
%\label{tgd}
%Let $G$ be a symmetric, weighted, turn-bipartite graph which is not bipartite. %Then $G$ must have a symmetric odd cycle.
%
%\end{lem}

\begin{lem}
\label{tgd}
Let $G$ be a connected, symmetric turn-bipartite graph which is not bipartite. Set~$k=\w(G)$. Assume that $\{a_1,\dotsc,b_k\}$ is an independent set, and that each vertex in it has precisely one incident edge from above and one from below.

$(${\rm a}$)$. Let $u$ and $u'$ be vertices of $G$ that are mirror images across $\ell$. Then for any doubly reduced subgraph $H$ of $G$, $u$ and $u'$ have opposite colors in $H$.

$(${\rm b}$)$. Let $x$ be a fixed reference vertex in $G\setminus\{a_1,\dotsc,b_k\}$, and let $H$ be a doubly reduced subgraph of $G$. Color the vertices in the bipartition classes of $H$ black and white so that $x$ is white. Then:

\ \ $(i)$ for any vertex $v\in G\setminus\{a_1,\dotsc,b_k\}$, the color of $v$ is uniquely determined, and is\linebreak\phantom{\ \ $(i)$} independent of $H$.

\ \ $(ii)$ the color of each $a_i$ and $b_i$ is determined by whether the cut above or the cut below\linebreak\phantom{\ \ $(i)$} them was made to obtain $H$.

%The bipartition classes of each doubly reduced subgraph of $G$ are uniquely determined.

\end{lem}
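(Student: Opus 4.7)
The plan is to analyze $G^\circ := G\setminus\{a_1,\dotsc,b_k\}$, the graph obtained by deleting all $\ell$-vertices from $G$. Two basic observations set the stage: by Lemma \ref{tgc}(b) every odd cycle of $G$ must meet $\ell$, so $G^\circ$ has no odd cycle and is bipartite; and since the set $\{a_1,\dotsc,b_k\}$ is $\sigma$-invariant, $\sigma$ restricts to a graph automorphism of $G^\circ$. The hypothesis that every $a_i,b_i$ has exactly one edge above and one below $\ell$ means that in any doubly reduced subgraph $H$, each $\ell$-vertex is a pendant of $H$ attached either to its upper neighbor $w_+$ (if cut below) or its lower neighbor $w_-$ (if cut above), and the induced subgraph of $H$ on the non-$\ell$ vertices coincides with $G^\circ$.

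The heart of the proof is the following claim: there exists a $2$-coloring $\chi$ of $G^\circ$ satisfying $\chi(\sigma(v))\neq\chi(v)$ for every $v\in V(G^\circ)$. When $G^\circ$ is connected, its $2$-coloring is unique up to a global swap, so $\sigma$ either preserves or swaps the two bipartition classes. If $\sigma$ were to preserve them, then for each $\ell$-vertex $a$ we would have $\chi(w_+)=\chi(\sigma(w_+))=\chi(w_-)$, so setting $\chi(a):=1-\chi(w_+)$ would extend $\chi$ to a proper $2$-coloring of all of $G$, contradicting the hypothesis that $G$ is not bipartite; hence $\sigma$ must swap, producing the desired $\chi$. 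When $G^\circ$ is disconnected, the same contradiction argument rules out $\sigma|_D$ preserving the bipartition on any $\sigma$-invariant component $D$ of $G^\circ$, while for each pair $\{D,\sigma(D)\}$ with $D\neq\sigma(D)$ we color $D$ freely and transfer the coloring to $\sigma(D)$ by $\chi(\sigma(v)):=1-\chi(v)$.

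Given such $\chi$, extend it to a $2$-coloring of $H$ by giving each pendant $\ell$-vertex the color opposite to its unique neighbor. For part (a), if $u$ and $u'=\sigma(u)$ are off-$\ell$ mirror images, then $\chi(u)\neq\chi(u')$ by the swap property, and this persists after the extension to $H$. For part (b)(i), once the reference condition $\chi(x)=\text{white}$ pins down $\chi$ on $G^\circ$, the color of any $v\in V(G^\circ)$ in $H$ equals $\chi(v)$, which is built from $G^\circ$ alone and hence independent of the cuts used to define $H$. For part (b)(ii), the color of $a_i$ in $H$ is opposite to that of its kept neighbor, which is $w_+$ if cut below and $w_-$ if cut above; by part (a) these two have distinct colors under $\chi$, so the color of $a_i$ is indeed determined by the cut direction.

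The main obstacle is the construction of $\chi$ in the disconnected case: one must rule out the possibility that a $\sigma$-invariant component $D$ of $G^\circ$ has $\sigma|_D$ preserving its bipartition, which would place a same-colored mirror pair in $D$ and violate part (a). Handling this requires leveraging the symmetric odd cycle $\bar{C}$ produced by Lemma \ref{tgc}(a), whose arcs in $G^\circ$ propagate the swap behavior across the components they meet.
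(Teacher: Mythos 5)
Your core idea --- that the reflection $\sigma$ must swap the two bipartition classes of $G^\circ:=G\setminus\{a_1,\dotsc,b_k\}$, since otherwise one could color each $\ell$-vertex opposite to its two (same-colored) neighbors and exhibit $G$ as bipartite --- is correct, and it is a clean, more conceptual alternative to the paper's argument, which instead exhibits an explicit odd walk from $u$ to $u'$ through the odd arc of a symmetric odd cycle. However, there is a genuine gap: everything hinges on $G^\circ$ (equivalently, on the doubly reduced subgraphs $H$, which are just $G^\circ$ with pendant $\ell$-vertices attached) being connected, and you neither prove this nor correctly handle the alternative. If $H$ were disconnected, the instruction ``color the bipartition classes of $H$ so that $x$ is white'' would not even determine a unique coloring, so the conclusions of (a) and (b) would not be well-posed on components not containing $x$. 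Moreover, your claimed repair for the disconnected case fails: the assertion that ``the same contradiction argument rules out $\sigma|_D$ preserving the bipartition on any $\sigma$-invariant component $D$'' does not follow, because a single component $D$ on which $\sigma$ preserves the bipartition only shows that the piece of $G$ built over $D$ is bipartite --- the odd cycles of $G$ could live elsewhere, so no contradiction with the non-bipartiteness of $G$ is obtained. You acknowledge this obstacle in your closing paragraph but only gesture at a resolution.

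The missing step is exactly what the paper supplies before addressing (a) and (b). From $G$ connected one gets $G_{\geq}$ connected by reflecting subpaths across $\ell$, and then $G_{>0}$ connected using the hypothesis that each $\ell$-vertex has a unique incident edge from above (a path in $G_{\geq}$ between two vertices above $\ell$ cannot pass through an $\ell$-vertex, as it would have to visit that vertex's unique upper neighbor twice); by symmetry $G_{<0}$ is connected as well. Lemma \ref{tgc}(a) provides a symmetric odd cycle, and its odd arc between a mirror pair $c,c'$ contains no vertex on $\ell$ (a $\sigma$-invariant arc through an $\ell$-vertex would have even length), hence lies in $G^\circ$ and joins $G_{>0}$ to $G_{<0}$. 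Therefore $G^\circ$ is connected, your ``disconnected case'' is vacuous, and your connected-case argument completes the proof. With this connectivity lemma inserted, your write-up becomes a correct variant of the paper's proof.
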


\begin{proof} Note that $G$ connected implies that $G_{\geq}$ is connected. Indeed, let $u$ and $v$ be two vertices of $G$ on or above $\ell$. Since $G$ is connected, there is a path in $G$ that connects $u$ to $v$. Reflecting any portion of $P$ that is below $\ell$ across the symmetry axis yields a path in $G_{\geq}$ connecting $u$ to~$v$.

Note also that this in turn implies that the subgraph graph $G_{>0}$ induced by the vertices of $G$ above $\ell$ is connected. Indeed, the path $P$ in $G_{\geq}$ between two vertices above $\ell$ cannot visit a vertex on $\ell$, otherwise the unique neighbor in $G_{\geq}$ of that vertex would be visited twice by $P$.

(a). By Lemma \ref{tgc}(a), $G$ has a symmetric odd cycle $C$. Let $c$ and $c'$ be mirror image vertices on~$C$, with $c$ above $\ell$. Since $G_{>0}$ is connected, there is a  path $P$ in it from $u$ to $c$. Follow $P$ from $u$ to $c$, then follow $C$ to $c'$ choosing the odd arc (the sum of the lengths of the two arcs of $C$ between $c$ and $c'$ is odd, so one of them is odd). Travel from $c'$  to $u'$ along the mirror image $P'$ of $P$. This is a walk from $u$ to $u'$ with an odd number of edges, and since it does not contain any vertex on $\ell$ ($P$ and $P'$ clearly don't, and the odd arc of $C$ doesn't either, otherwise by symmetry it would have even length), this walk is contained in all doubly reduced subgraphs of $G$.

(b). $(i)$. Suppose first that $v$ is above $\ell$.  As $G_{>0}$ is connected, we can choose a path $P$ in it connecting $x$ to $v$. Clearly, $P$ is included in $H$, and is the same for all doubly reduced subgraphs $H$. Since the parity of its length determines whether $v$ and $x$ have the same or opposite color in $H$, this proves the statement in this case.

If $v$ is below $\ell$, use the same argument, but with $x$ replaced by its mirror image $x'$. By part~(a), the color of $x'$ in $H$ is determined (namely, it must be black), and the proof is complete.

$(ii)$. In $H$, each $a_i$ is incident to precisely one vertex $v\notin\ell$. Since by $(i)$ the color of $v$ is determined, so is the color of $a_i$. Since, by part (a), $v$ and its mirror image $v'$ have opposite colors, the color of $a_i$ is reversed if the cut at $a_i$ is reversed. The same argument  works for the~$b_i$'s.
\end{proof}

\parindent0pt
{\it Proof of Theorem $\ref{tgb}$.} Without loss of generality we may assume that $G$ is connected (otherwise just apply the statement for each of its connected components, and multiply).

\parindent12pt
Set $k=\w(G)$. We prove first the special case when  $\{a_1,\dotsc,b_k\}$ is an independent set, and each vertex in it has precisely one incident edge from above, and one from below.

We claim that if $H$ is a doubly reduced subgraph of $G$, $x$ is the number of its vertices $a_i$ that have the same color as $a_1$, and $y$ is the number of its vertices $b_i$ that have color opposite to the color of $a_1$, then $H$ is balanced\footnote{ I.e., has the same number of vertices in its two bipartition classes.} if and only if $x=y$.

Indeed, suppose for definiteness that $a_1$ is white, and let $\al$ and $\be$ be the number of white and black vertices of $H$ above $\ell$, respectively. By Lemma \ref{tgd}(a), the number of white vertices of $H$ below $\ell$ is $\be$, and the number of its black vertices below $\ell$ is $\al$. Thus, the total number of white vertices of $H$ is $\al+\be+x+(k-y)$, while the total number of its black vertices is $\be+\al+(k-x)+y$. These are equal precisely if $x=y$, as claimed.

By Lemma \ref{tgd}(b)$(ii)$, there are exactly two doubly reduced subgraphs of $G$ in which all $a_i$'s have the same color, and all $b_i$'s have the opposite color. Out of these two, let $H_1$ be the one in which the cut above $a_1$ was chosen.

Let $G_1,\dotsc,G_{2^k}$ be the reduced subgraphs of $G$ (recall that reduced subgraphs are obtained from $G$ by cutting only at the $a_i$'s), $G_1$ being the one in which the cuts at the $a_i$'s are the same as in $H_1$.

We clearly have
\begin{align}
\M(G)=\sum_{i=1}^{2^k}\M(G_i).
%\nonumber
%\\
%=2^k\M(\bar{G}_1)
\label{egb}
\end{align}
Furthermore, by Lemma \ref{tga}, the subgraphs $G_1,\dotsc,G_{2^k}$ have the same weighted count of perfect matchings. Therefore \eqref{egb} implies
\begin{align}
\M(G)=2^k\M(G_1).
\label{egc}
\end{align}
Let $H_1,\dotsc,H_{2^k}$ be the subgraphs of $G_1$ obtained by cutting above or below $b_i$, $i=1,\dotsc,k$, in all possible ways (the doubly reduced subgraph $H_1$ defined above is clearly one of them). Then 
\begin{align}
\M(G_1)=\sum_{i=1}^{2^k}\M(H_i).
\label{egd}
\end{align}
However, by the third paragraph in this proof, in order for $H_i$ to be balanced, it needs to have the same number of $a_i$'s of the same color as $a_1$ as $b_i$'s of opposite color to $a_1$. Our definitions of $G_1$ and $H_1$, together with Lemma \ref{tgd}(b)$(ii)$, imply that all $a_i$'s have the same color in each $H_i$. It follows that all $b_i$'s must have the opposite color in order for $H_i$ to be balanced. Lemma \ref{tgd}(b)$(ii)$ implies that this happens in precisely one $H_i$, namely in $H_1$. As all the other summands in \eqref{egc} are zero\footnote{ Since a bipartite graph has no perfect matching unless it is balanced.}, equation \eqref{ega} follows from \eqref{egc}, \eqref{egd}, and the readily checked fact (which follows from Lemma \ref{tgd}) that the cuts made in $H_1$ are precisely the ones made in the definition of $G'$ (which implies that $G'=H_1$ in this case)

The general case follows using the vertex splitting lemma \cite[Lemma 1.3]{FT}, as in the proof of Theorem 1.2 in \cite{FT} .
\hfill $\square$

\section{Square cylinder graphs of odd girth}

As pointed out in the previous section, for odd $m$ the approach presented there for finding $\M(C_{m,n})$ does not work. However, there is an alternative approach that works for odd $m$ (but not when $m$ is even!), based on the TFK formula and the factorization theorem of \cite{FT}. The result we obtain this way for odd $m$ can be put together with the formula we obtained in Theorem \ref{tea} for even $m$, obtaining the following.

\begin{theo}
\label{tfa}
For all non-negative integers $m$ and $n$ we have\footnote{ For even $m$ this becomes precisely Theorem \ref{tea}. Suppose therefore that $m$ is odd. Then the factors in the first three products are always non-negative, and the ones in the fourth are always less or equal than zero. Therefore for even $n$, the overall product under the fourth root is non-negative. If $n$ is odd, then --- as we are assuming $m$ odd --- $C_{m,n}$ has an odd number of vertices, and thus no perfect matchings. This agrees with the given formula, as in that case the factor in the fourth product corresponding to $k=(n+1)/2$ is zero.} 
\begin{align}
\M(C_{m,n})&=
\nonumber
\\[10pt]
&\!\!\!\!\!\!\!\!\!\!\!\!\!\!\!\!\!\!\!\!\!\!\!\!\!
\sqrt[4]{
\prod_{k=1}^n
\left[1+\left(\!\cos\frac{k\pi}{n+1}+\sqrt{1+\cos^2\frac{k\pi}{n+1}}\,\right)^{\!\!m}\,\right]
\!\left[1+\left(\!\cos\frac{k\pi}{n+1}-\sqrt{1+\cos^2\frac{k\pi}{n+1}}\,\right)^{\!\!m}\,
\right]
}
\nonumber
\\[10pt]
&\!\!\!\!\!\!\!\!\!\!\!\!\!\!\!\!\!\!\!\!\!\!\!\!\!\!\!\!\!\!\!\!
\times
\sqrt[4]{
\prod_{k=1}^n
\left[1+\left(\!-\cos\frac{k\pi}{n+1}+\sqrt{1+\cos^2\frac{k\pi}{n+1}}\,\right)^{\!\!m}\,\right]
\!\left[1+\left(\!-\cos\frac{k\pi}{n+1}-\sqrt{1+\cos^2\frac{k\pi}{n+1}}\,\right)^{\!\!m}\,
\right]
}.
\label{efa}
\end{align}

\end{theo}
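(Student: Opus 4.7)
For even $m$, formula \eqref{efa} collapses to \eqref{eea} of Theorem~\ref{tea} by a direct algebraic manipulation. The reindexing $k\mapsto n+1-k$ sends $\cos\frac{k\pi}{n+1}$ to $-\cos\frac{k\pi}{n+1}$ while leaving $\sqrt{1+\cos^2\frac{k\pi}{n+1}}$ invariant, so the product under the second fourth root in \eqref{efa} equals the product under the first. Thus both fourth roots coincide and their product becomes the square root of a single product, which is precisely \eqref{eea}. (Note that this identity $P_3=P_1$, $P_4=P_2$ between the four products actually holds for all $m$; for even $m$ the individual factors of $P_1$ and $P_2$ are manifestly positive, so the square root is real, while for odd $m$ the two fourth roots must be paired together to give the correct real value $\sqrt{|P_1 P_2|}$.)

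For odd $m$, the square cylinder $C_{m,n}$ is not bipartite (its cycles around the tube have odd length $m$), so the fabric-graph proof of Theorem~\ref{tea} does not apply. The plan is to combine the classical TFK formula for rectangular grid graphs with the factorization theorem of \cite{FT}. The idea is to exhibit $C_{m,n}$ as one of the two ``halves'' of a suitable bipartite reflection-symmetric graph $\Gamma$, where $\Gamma$ is (a weighted version of) a rectangular grid graph and the reflection symmetry of $\Gamma$ produces the cyclic closure of $C_{m,n}$ after the factorization cuts. Applying Lemma~1.2 of \cite{FT} to $\Gamma$ yields an identity of the form
\begin{equation*}
\M(\Gamma)=2^{w(\Gamma)}\,\M(C_{m,n})\,\M(\Gamma_{-}),
\end{equation*}
where $\Gamma_{-}$ is an auxiliary graph with an easily computable matching count. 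The TFK formula gives $\M(\Gamma)$ as an explicit double product, so dividing by $\M(\Gamma_{-})$ and $2^{w(\Gamma)}$ expresses $\M(C_{m,n})$ as an explicit product.

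The final step is to simplify the resulting formula. The TFK expression brings in a double product of the form $\prod_{j}\prod_{k}\!\bigl(4\cos^{2}\tfrac{j\pi}{m+1}+4\cos^{2}\tfrac{k\pi}{n+1}\bigr)$, which must be reduced to the single product over $k$ with factors involving $(\cos\tfrac{k\pi}{n+1}\pm\sqrt{1+\cos^{2}\tfrac{k\pi}{n+1}})^{m}$ appearing in \eqref{efa}. This relies on a Chebyshev-polynomial identity: fixing $k$ and setting $\lambda=4\cos^{2}\tfrac{k\pi}{n+1}$, the inner product $\prod_{j=1}^{m}\!\bigl(\lambda+4\cos^{2}\tfrac{j\pi}{m+1}\bigr)$ is evaluated in closed form using $\prod_{j=1}^{m}\!\bigl(y-2\cos\tfrac{2j\pi}{m+1}\bigr)$, and the resulting expression factors neatly because $\cos\tfrac{k\pi}{n+1}\pm\sqrt{1+\cos^{2}\tfrac{k\pi}{n+1}}$ are precisely the two roots of $z^{2}-2\cos\tfrac{k\pi}{n+1}\,z-1=0$.

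The main obstacle is the construction of $\Gamma$ and the identification of its two halves under the factorization theorem. Because $C_{m,n}$ is genuinely cylindrical while $\Gamma$ must be planar, bipartite and reflection-symmetric, realizing the cyclic closure as a symmetry cut is delicate and hinges on $m$ being odd; the construction does not work when $m$ is even, explaining the parenthetical in the author's introduction to Section~7 that this alternative method applies only to odd~$m$.
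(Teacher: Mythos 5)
Your treatment of the even-$m$ case is correct and supplies the detail behind the paper's footnote: under $k\mapsto n+1-k$ the cosine changes sign while $\sqrt{1+\cos^2\frac{k\pi}{n+1}}$ is fixed, so the two fourth roots coincide and multiply to the square root in \eqref{eea}. Your overall strategy for odd $m$ (TFK formula plus the factorization theorem plus a Chebyshev identity) is also the right family of ideas, and your observation that $\cos\frac{k\pi}{n+1}\pm\sqrt{1+\cos^2\frac{k\pi}{n+1}}$ are the roots of $z^2-2\cos\frac{k\pi}{n+1}z-1=0$ is exactly the mechanism behind the closed form.

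However, the central construction you propose cannot work as stated, and this is a genuine gap. You want to exhibit $C_{m,n}$ ($m$ odd) as one of the two halves of a \emph{bipartite} reflection-symmetric planar graph $\Gamma$, so that \cite[Theorem 1.2]{FT} gives $\M(\Gamma)=2^{\w(\Gamma)}\M(C_{m,n})\M(\Gamma_-)$. But the halves produced by that theorem are subgraphs of $\Gamma$ (edges deleted at axis vertices, some weights halved), and every subgraph of a bipartite graph is bipartite; since $C_{m,n}$ with $m$ odd contains an odd cycle (the girth-$m$ cycle around the cylinder), it can never arise as a half of a bipartite $\Gamma$. The paper's argument runs in the opposite direction: it first \emph{extends} the factorization theorem to symmetric \emph{turn-bipartite} graphs (Theorem \ref{tgb}, the content of Section 6 --- $C_{m,n}$ is turn-bipartite precisely when $m$ is odd, which is the real reason the method is restricted to odd girth), applies this extension to $C_{2m+1,2n}$ itself to cut it open into a half-weighted rectangle $\dot{R}_{2m+1,2n}$ with $\M(C_{2m+1,2n})=2^n\M(\dot{R}_{2m+1,2n})$, and only then applies the original factorization theorem to $R_{4m+1,2n}$ to express $\M(\dot{R}_{2m+1,2n})$ through unweighted rectangles, yielding $\M(C_{2m+1,2n})=\M(R_{4m+1,2n})/\M(R_{2m,2n})$. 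Your proposal omits the need for this extension entirely, and no choice of bipartite $\Gamma$ can route around it. A secondary inaccuracy: after the TFK cancellation the surviving inner product runs over \emph{odd} indices $j\in\{1,3,\dotsc,2m+1\}$ with denominator $4m+2$, so the relevant Chebyshev evaluation is via $T_{2m+1}$ (first kind), not the full product $\prod_{j=1}^{m}\bigl(\lambda+4\cos^2\frac{j\pi}{m+1}\bigr)$ you describe; the oddness of $2m+1$ is essential for the factorizations \eqref{efj} and \eqref{efl}, as the paper notes in Remark 7.
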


%It turns out that we can obtain a different product formula for the number of perfect matchings of $C_{m,n}$ for odd $m$, starting from the Kasteleyn-Temperley-Fisher formula \cite{Kast,TF,Fone} for the number of perfect matchings of a rectangular grid graph and using the factorization theorem of \cite{FT}. This leads to the following result.

In fact, the alternative approach mentioned above leads to a formula (see \eqref{tfb} below) that looks quite different from \eqref{tfa}. We prove this formula first, and then show that it agrees with~\eqref{tfa}.

\begin{theo} 
\label{tfb}
For non-negative integers $m$ and $n$ we have
\begin{equation}
\M(C_{2m+1,2n})=\prod_{k=1}^n\prod_{j\in\{1,3,\dotsc,2m+1\}}\left(4\cos^2\frac{j\pi}{4m+2}+4\cos^2\frac{k\pi}{2n+1}\right).
\label{efb}
\end{equation}

\end{theo}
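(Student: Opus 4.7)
The plan is to apply the factorization theorem twice --- once in its extension Theorem~\ref{tgb} to the cylinder itself, and once in its bipartite form \cite[Theorem~1.2]{FT} to a larger rectangular grid --- and to combine the two identities, reducing the problem to a direct TFK computation.

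I would embed $C_{2m+1, 2n}$ in an annulus with the $2n$ column cycles as concentric circles and the $2m+1$ rows as radial arcs. The graph is turn-bipartite because every non-contractible cycle wraps around the single hole with odd length at least $2m+1$. Since $2m+1$ is odd, a diameter of the annulus can be chosen to meet each column in exactly its row-$0$ vertex, so the symmetry axis contains the $2n$ vertices of row~$0$ and $\w(C_{2m+1, 2n}) = n$. The color-dictated cuts uniformly sever one of the two cylindrical neighbors of every row-$0$ vertex, and a column-wise relabelling identifies the reduced graph with $G'$, the rectangular grid $P_{2m+1}\times P_{2n}$ in which the horizontal edges of one distinguished row carry weight $1/2$ and all other edges carry weight $1$. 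Theorem~\ref{tgb} gives
\[
\M(C_{2m+1, 2n}) = 2^{n}\,\M(G').
\]
Next, apply the analogous factorization to $P_{4m+1}\times P_{2n}$ with the symmetry axis being the middle row~$2m$. The color-dictated cuts sever row~$2m$ from row~$2m+1$, disconnecting the reduced subgraph into a copy of the same $G'$ (from the bottom $2m+1$ rows, with row~$2m$ halved) and the unweighted grid $P_{2m}\times P_{2n}$ (from the top $2m$ rows). This yields
\[
\M(P_{4m+1}\times P_{2n}) = 2^{n}\,\M(G')\,\M(P_{2m}\times P_{2n}).
\]
Dividing the two identities cancels $2^{n}\M(G')$ and produces the key bridge
\[
\M(C_{2m+1, 2n}) = \frac{\M(P_{4m+1}\times P_{2n})}{\M(P_{2m}\times P_{2n})}.
\]

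Both sides on the right are now amenable to the TFK formula \eqref{eff}. In the numerator's double product over $a\in\{1,\dots,2m+1\}$ and $b\in\{1,\dots,n\}$, split by the parity of $a$: even $a = 2a'$ with $a'\in\{1,\dots,m\}$ satisfies $a\pi/(4m+2) = a'\pi/(2m+1)$, so the even-$a$ sub-product coincides exactly with the TFK product for $\M(P_{2m}\times P_{2n})$, while the odd-$a$ sub-product is precisely the right-hand side of \eqref{efb}. Cancellation in the ratio leaves exactly \eqref{efb}.

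The main obstacle is the identification --- in both factorizations --- of the reduced graph as the same $G'$ up to the obvious row-reversal isomorphism of $P_{2m+1}\times P_{2n}$: one must track the color-dictated cuts carefully, verifying that the $2n$ axis edges each receive weight $1/2$, that the axis is severed from exactly one side in each case, and that the column-wise relabelling on the cylinder side genuinely realizes the flat grid. Once this identification is in place, the TFK calculation and the parity split of the $a$-product are routine.
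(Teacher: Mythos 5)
Your proposal is correct and follows essentially the same route as the paper's own proof: apply Theorem \ref{tgb} to the annulus-embedded cylinder to get $\M(C_{2m+1,2n})=2^n\M(\dot R_{2m+1,2n})$, apply the original factorization theorem to $R_{4m+1,2n}$, divide to obtain $\M(C_{2m+1,2n})=\M(R_{4m+1,2n})/\M(R_{2m,2n})$, and finish by splitting the TFK product over the parity of the first index. Note that your denominator $\M(R_{2m,2n})$ is the correct one --- the paper's equation \eqref{efe} misprints it as $\M(R_{2m+1,2n})$, although the subsequent cancellation is carried out as if the denominator were $\M(R_{2m,2n})$, exactly as in your computation.
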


\begin{figure}[t]
\vskip0.2in
\centerline{
\hfill
{\includegraphics[width=0.45\textwidth]{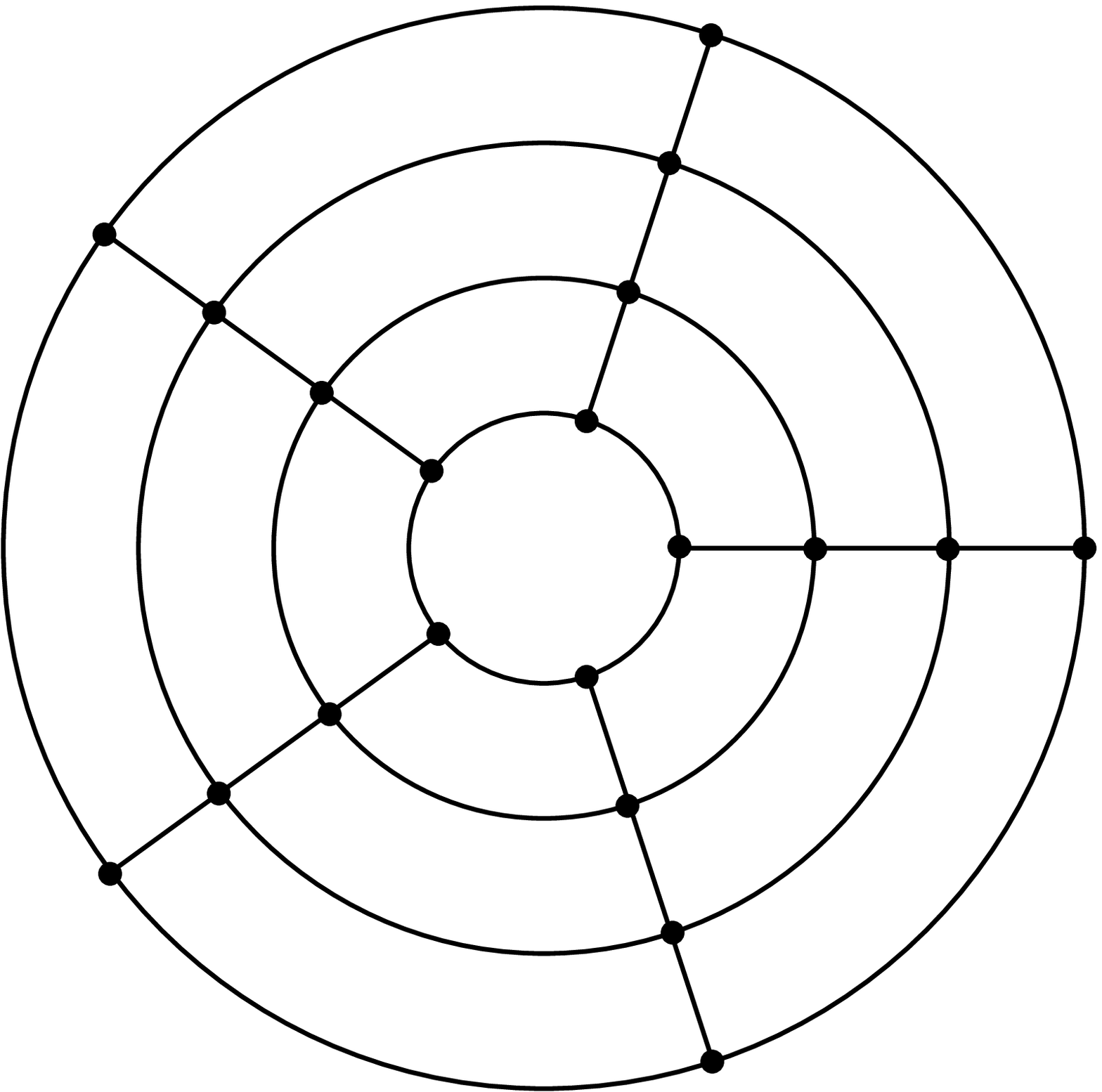}}
\hfill
{\includegraphics[width=0.175\textwidth]{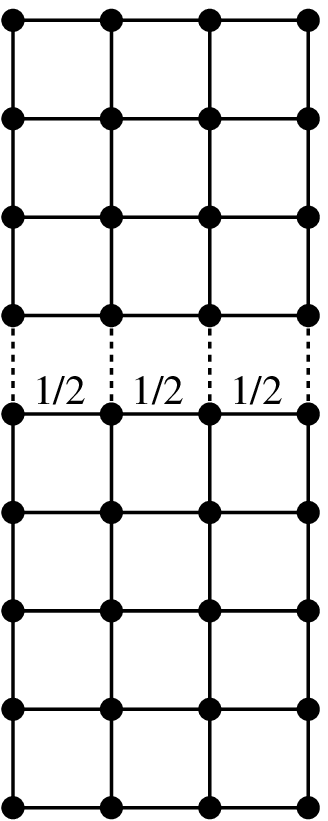}}
\hfill
}
%\vskip-0.1in
\caption{\label{ffa} Embedding the rectangular cylinder grid $C_{5,4}$ in the plane (left). Applying the factorization theorem to the rectangular grid $R_{9,4}$ (right).}
%\vskip-0.15in
\end{figure}

\begin{proof} Embed the rectangular cylinder graph $C_{2m+1,2n}$ in an annulus as indicated on the left in Figure \ref{ffa}. Then $C_{2m+1,2n}$ is clearly symmetric and turn-bipartite. Apply Theorem \ref{tgb} for $G=C_{2m+1,2n}$. One readily sees that the resulting graph $G'$ is the $(2m+1)\times2n$ rectangular grid graph $\dot{R}_{2m+1,2n}$, with the $2n-1$ edges along the top weighted by $1/2$, and all others weighted by 1. Then Theorem \ref{tgb} gives
\begin{equation}
\M(C_{2m+1,2n})=2^n \M(\dot{R}_{2m+1,2n}).
\label{efc}  
\end{equation}
In turn, $\M(\dot{R}_{2m+1,2n})$ can be expressed in terms of numbers of matchings of unweighted rectangular grid graphs as follows. Consider the rectangular grid graph $R_{4m+1,2n}$, and apply to it the original factorization theorem \cite[Theorem 1.2]{FT} (this is illustrated in the picture on the right in Figure \ref{ffa}). This yields
\begin{equation}
\M(R_{4m+1,2n})=2^n \M(R_{2m,2n}) \M(\dot{R}_{2m+1,2n}).
\label{efd}  
\end{equation}
Combining \eqref{efc} and \eqref{efd} leads to
\begin{equation}
\M(C_{2m+1,2n})=\frac{\M(R_{4m+1,2n})}{\M({R}_{2m+1,2n})}.
\label{efe}  
\end{equation}
Using the TFK formula (see \cite{FT,Kast})
\begin{equation}
\M(R_{m,n})=\prod_{j=1}^{\lceil m/2\rceil} \prod_{k=1}^{\lceil n/2\rceil}
\left(4\cos^2\frac{j\pi}{m+1}+4\cos^2\frac{k\pi}{n+1}\right)
\label{eff}
\end{equation}
in \eqref{efe}, all factors at the denominator cancel out, and one obtains formula \eqref{efb}.
%Modification of FT + KTF formula.
\end{proof}

\begin{lem} For $z$ an indeterminate we have 
\label{tfc}
%RHS (7.2) = (RHS (7.1)) <- m=2m+1, n=2n
%
\begin{align}
&
\prod_{k\in\{1,3,\dotsc,2m+1\}}\left(4z^2+4\cos^2\frac{k\pi}{4m+2}\right)
=
\nonumber
\\[10pt]
&\ \ \ \ \ \ \ \ \ \ \ \ \ \ \ \ \ \ \ \ 
2z\sqrt{
-\left[1-\left(z+\sqrt{1+z^2}\right)^{4m+2}\right]
\left[1-\left(z-\sqrt{1+z^2}\right)^{4m+2}\right]
}.
\label{efg}
\end{align}
\end{lem}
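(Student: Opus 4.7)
The plan is to reduce both sides of \eqref{efg} to explicit polynomial expressions in $z$ via the hyperbolic substitution $z=\sinh t$. With this choice one has $z+\sqrt{1+z^2}=e^{t}$ and $z-\sqrt{1+z^2}=-e^{-t}$, and since the exponent $N:=4m+2$ is even, both $(z\pm\sqrt{1+z^2})^N$ simplify to $e^{\pm Nt}$. A direct computation then gives
\[
-\bigl[1-e^{Nt}\bigr]\bigl[1-e^{-Nt}\bigr]=2\cosh(Nt)-2=4\sinh^2(Nt/2),
\]
so the square root in \eqref{efg} equals $2\sinh((2m+1)t)$, the branch being fixed by the requirement that the outcome be a polynomial in $z$ with positive leading coefficient. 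Hence the right-hand side of \eqref{efg} becomes $2z\cdot 2\sinh((2m+1)t)=4\sinh t\,\sinh((2m+1)t)$.

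Next, I would invoke the classical factorization of the Chebyshev polynomial of the second kind. From $U_{2m}(\cosh t)=\sinh((2m+1)t)/\sinh t$ and the fact that the zeros of $U_{2m}$ pair up as $\pm\cos(j\pi/(2m+1))$ for $j=1,\dots,m$, one obtains
\[
\frac{\sinh((2m+1)t)}{\sinh t}=2^{2m}\prod_{j=1}^{m}\left(\sinh^2 t+\sin^2\frac{j\pi}{2m+1}\right),
\]
so the right-hand side of \eqref{efg} equals $2^{2m+2}\,z^{2}\prod_{j=1}^{m}\bigl(z^{2}+\sin^2(j\pi/(2m+1))\bigr)$.

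On the left-hand side, pulling the $4$ out of each of the $m+1$ factors yields $2^{2m+2}\prod_{k}(z^{2}+\cos^2(k\pi/(4m+2)))$. The factor indexed by $k=2m+1$ contributes precisely $z^2$ (since $\cos(\pi/2)=0$), while for the remaining odd indices $k\in\{1,3,\dots,2m-1\}$ the reindexing $k=2m+1-2j$, which is a bijection onto $j\in\{1,\dots,m\}$, combined with
\[
\cos\frac{k\pi}{4m+2}=\cos\!\left(\frac{\pi}{2}-\frac{j\pi}{2m+1}\right)=\sin\frac{j\pi}{2m+1},
\]
matches the two sides term by term. The only minor subtlety is the choice of branch for the square root in \eqref{efg}; this is resolved by the observation that both sides are polynomials in $z$ of degree $2m+2$ with identical leading coefficient $4^{m+1}z^{2m+2}$, which pins down the correct sign.
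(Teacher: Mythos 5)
Your proof is correct, and it takes a route that differs in its mechanics from the paper's. The paper stays with the Chebyshev polynomial of the \emph{first} kind: it evaluates the left-hand side as $(-1)^{m+1}4iz\,T_{2m+1}(iz)$ via the root factorization of $T_{2m+1}$, simplifies the radicand algebraically to $-\bigl[(z+\sqrt{1+z^2})^{2m+1}+(z-\sqrt{1+z^2})^{2m+1}\bigr]^2$ using $(z+\sqrt{1+z^2})(z-\sqrt{1+z^2})=-1$, and closes with the formula $T_n(x)=\tfrac12\bigl[(x+\sqrt{x^2-1})^n+(x-\sqrt{x^2-1})^n\bigr]$. You instead parametrize by $z=\sinh t$, which turns the radicand into $4\sinh^2((2m+1)t)$ at once, and then invoke the root factorization of the \emph{second}-kind polynomial $U_{2m}$ together with the reindexing $\cos\frac{(2m+1-2j)\pi}{4m+2}=\sin\frac{j\pi}{2m+1}$ to match the two sides factor by factor. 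What your version buys is that it avoids imaginary arguments and the bookkeeping of powers of $i$ and signs $(-1)^{m+1}$, at the cost of the extra reindexing step and of having to note (as you implicitly do) that an identity verified for all real $t$, i.e.\ all real $z$, extends to an identity of polynomials. Your handling of the branch of the square root --- matching the leading coefficient $4^{m+1}z^{2m+2}$ on both sides --- is sound, and your observation that the evenness of the exponent $4m+2$ is what makes $(z-\sqrt{1+z^2})^{4m+2}=e^{-(4m+2)t}$ mirrors the paper's remark that the oddness of the girth parameter is essential here.
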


%\begin{proof}
\parindent0pt
{\it Proof.}
The $n$th Chebyshev polynomial of the first kind, $T_n(x)$, is defined by $T_n(\cos\theta)=\cos(n\theta)$, and thus satisfies the recurrence $T_{n+1}(x)=2xT_n(x)-T_{n-1}(x)$. Together with the initial values~$T_0(x)=1$ and $T_1(x)=x$, this implies that $T_n(x)$ is a polynomial of degree $n$, and that for $n\geq1$ its leading coefficient is $2^{n-1}$. By its defining equation, the roots of $T_n(x)$ are then $\cos\frac{(2k+1)\pi}{2n}$, $k=0,\dotsc,n-1$. Thus we have
\begin{equation}
T_{2m+1}(x)=\frac{2^{2m}}{x}\prod_{k\in\{1,3,\dotsc,2m+1\}}\left(x-\cos\frac{k\pi}{4m+2}\right)
\left(x+\cos\frac{k\pi}{4m+2}\right),
\label{efh}
\end{equation}
where the factor $x$ in the denominator accounts for the fact that the root $x=0$ appears twice in the product above. Using this, we obtain
\begin{align}
\prod_{k\in\{1,3,\dotsc,2m+1\}}\left(4z^2+4\cos^2\frac{k\pi}{4m+2}\right)
&=
\prod_{k\in\{1,3,\dotsc,2m+1\}}\left(2z-2i\cos\frac{k\pi}{4m+2}\right)
\left(2z+2i\cos\frac{k\pi}{4m+2}\right)
\nonumber
\\[10pt]
&=(-2i)^{2m+2}\,\frac{iz}{2^{2m}}\,T_{2m+1}(iz)=(-1)^{m+1}\,4iz\,T_{2m+1}(iz).
\label{efi}
\end{align}
On the other hand, we have 
\begin{align}
\left[1-\left(z+\sqrt{1+z^2}\right)^{4m+2}\right]
&\left[1-\left(z-\sqrt{1+z^2}\right)^{4m+2}\right]
\nonumber
\\[10pt]
&=
1-\left(z+\sqrt{1+z^2}\right)^{4m+2}-\left(z-\sqrt{1+z^2}\right)^{4m+2}+1
\nonumber
\\[10pt]
&=-\left[\left(z+\sqrt{1+z^2}\right)^{2m+1}+\left(z-\sqrt{1+z^2}\right)^{2m+1}\right]^2,
\label{efj}
\end{align}
so the right hand side of \eqref{efg} becomes
\begin{equation}
2z\left[\left(z+\sqrt{1+z^2}\right)^{2m+1}+\left(z-\sqrt{1+z^2}\right)^{2m+1}\right].
\label{efk}
\end{equation}
Therefore, to complete the proof it suffices to show that the right hand side of \eqref{efi} agrees with \eqref{efk}, which in turn amounts to
\begin{equation}
T_{2m+1}(iz)=\frac12\frac{(z+\sqrt{1+z^2})^{2m+1}+(z-\sqrt{1+z^2})^{2m+1}}{(-1)^{m+1} i}.
\label{efl}
\end{equation}
Writing $(-1)^{m+1} i=(-i)^{2m+1}$, this readily follows from the well-known fact (see e.g. \cite{AS}) that the Chebyshev polynomial of the first kind $T_n(x)$ can be written as
\begin{equation}
T_n(x)=\frac{(x+\sqrt{x^2-1})^n+(x-\sqrt{x^2-1})^n}{2}.\hskip 1in\square
\label{efm}
\end{equation}
%
%\end{proof}  

\parindent0pt
{\it Proof of Theorem $\ref{tfa}$.}
For even $m$, the statement follows directly from Theorem \ref{tea}. Suppose therefore that $m$ is odd. Then, if $n$ is also odd, the expression on the right hand side of \eqref{efa} is zero, and \eqref{efa} holds, as $C_{m,n}$ has then an odd number of vertices, and thus no perfect matchings.

\parindent12pt
For the remaining case, write the cylinder graph as $C_{2m+1,2n}$, to spell out the parities of its parameters. One readily sees that \eqref{efa} is then equivalent to
\begin{align}
\M(C_{2m+1,2n})&=
\nonumber
\\[10pt]
&\!\!\!\!\!\!\!\!\!\!\!\!\!\!\!\!\!\!\!\!\!\!\!\!\!\!\!\!\!\!\!\!\!\!\!\!\!\!\!\!\!
\sqrt{(-1)^n
\prod_{k=1}^n
\left[1+\left(\!\cos\frac{k\pi}{2n+1}+\sqrt{1+\cos^2\frac{k\pi}{2n+1}}\,\right)^{\!\!2m+1}\,\right]
\!\left[1+\left(\!\cos\frac{k\pi}{2n+1}-\sqrt{1+\cos^2\frac{k\pi}{2n+1}}\,\right)^{\!\!2m+1}\,
\right]
}
\nonumber
\\[10pt]
&\!\!\!\!\!\!\!\!\!\!\!\!\!\!\!\!\!\!\!\!\!\!\!\!\!\!\!\!\!\!\!\!\!\!\!\!\!\!\!\!\!\!\!\!\!\!\!\!
\times
\sqrt{
\prod_{k=1}^n
\left[1+\left(\!-\cos\frac{k\pi}{2n+1}+\sqrt{1+\cos^2\frac{k\pi}{2n+1}}\,\right)^{\!\!2m+1}\,\right]
\!\left[1+\left(\!-\cos\frac{k\pi}{2n+1}-\sqrt{1+\cos^2\frac{k\pi}{2n+1}}\,\right)^{\!\!2m+1}\,
\right]
}.
\label{efn}
\end{align}
Apply Lemma \ref{tfc} to each inner product in equation \eqref{efb}, once for each $z=\cos\frac{k\pi}{2n+1}$, $k=1,\dotsc,n$. Since\footnote{ One way to see this is to note that the Chebyshev polynomial of the second kind $U_{2n}(x)$ has leading coefficient $2^{2n}$, constant term $(-1)^n$, and zeros $\cos\frac{k\pi}{2n+1}$, $k=1,\dotsc,2n$.} $\prod_{k=1}^n\cos\frac{k\pi}{2n+1}=\frac{1}{2^n}$, this yields equation \eqref{efn}.
\hfill $\square$

\medskip
\parindent0pt
{\it Remark $7$.} Formula \eqref{efa} looks quite different from formula \eqref{efb}, but as we saw above the two are the same when $m$ is odd in the former. In the proof of Lemma \ref{tfc}, the fact that the $m$-parameter (denoted there by $2m+1$) is odd was essential in the proof (both for equations~\eqref{efj} and~\eqref{efl}). It doesn't seem that for even $m$ one can rewrite formula \eqref{efa} in the style of the TFK formula.

%\bigskip
%Add any remaining remarks from after introduction...

\parindent12pt

%\bigskip
%Very intriguing identity:
%Present (7)/47, c.XIX (note that it is phrased with square root, not fourth root! See second equality in T5.1). Proof of this identity would prove Conjecture 6.1!

%\newpage

\section{Concluding remarks}

In this paper we presented a new method for solving the two dimensional dimer problem. Our general result (see Theorem \ref{tba}) expresses the number of dimer coverings of graphs built from a certain type of linear building blocks (called strands) as the determinant of a matrix which is the product of the bi-adjacency matrices of its strands, or the latter plus a multiple of the identity matrix. For the case of the hexagonal and square lattices, we evaluated the resulting determinants and obtained explicit product formulas for the number of dimer coverings of hexagonal and square cylinder graphs (see Theorems \ref{tca} and \ref{tfa}). An interesting (and puzzling) feature of our formulas is that they look quite different from the classical TFK formula (see equation \eqref{eff}), which enumerates the dimer coverings of the closely related rectangular grid graphs. For one thing, our formulas have a linear number of factors, while the number of factors in the TFK formula is quadratic. A partial connection between these two types of formulas was given in Section 7. It would be interesting to elucidate further the relationship between them.

%\newpage

\end{document}